\numberwithin{equation}{section}
\numberwithin{figure}{section}
\theoremstyle{plain}
\newtheorem{thm}{\protect\theoremname}[section]
  \theoremstyle{plain}
  \newtheorem{prop}[thm]{\protect\propositionname}
  \theoremstyle{plain}
  \newtheorem{lem}[thm]{\protect\lemmaname}
  \theoremstyle{plain}
  \newtheorem{fact}[thm]{\protect\factname}
  \theoremstyle{plain}
  \newtheorem{cor}[thm]{\protect\corollaryname}
\newtheorem{hypothesis}{Assumption}
\numberwithin{equation}{section}
\newcommand{\1}{\mathbbm{1}}
\newcommand{\N}{\mathbb{N}}
\newcommand{\Z}{\mathbb{Z}}
\newcommand{\E}{\mathbb{E}}
\newcommand{\R}{\mathbb{R}}
\newcommand{\p}{\mathbb{P}}
\renewcommand{\epsilon}{ \varepsilon}
\newcommand{\Bsym}{\mathcal{B}_{\text{sym}}^0}
\newcommand{\Ue}{\mathcal{U}_\epsilon}
\DeclareMathOperator*{\Id}{Id}
   \providecommand{\fg}{\ifdim\lastskip>\z@\unskip\fi~\frqq}%
  \providecommand{\corollaryname}{Corollary}
  \providecommand{\factname}{Fact}
  \providecommand{\lemmaname}{Lemma}
  \providecommand{\propositionname}{Proposition}
\providecommand{\theoremname}{Theorem}
\begin{document}

\title{Central-limit Theorem for conservative fragmentation chains}

\author{Sylvain Rubenthaler}

\email{rubentha@unice.fr}

\date{\today}
\begin{abstract}
We are interested in a fragmentation process. We observe fragments
frozen when their sizes are less than $\epsilon$ ($\epsilon>0$).
Is is known (\cite{bertoin-martinez-2005}) that the empirical measure
of these fragments converges in law, under some renormalization. In
\cite{hoffmann-krell-2011}, the authors show a bound for the rate
of convergence. Here, we show a central-limit theorem, under some
assumptions. 
\end{abstract}

\maketitle

\section{Introduction}

\subsection{Scientific and economic context}

One of the main goals in the mining industry is to extract blocks
of metallic ore and then separate the metal from the valueless material.
To do so, rock is fragmented into smaller and smaller rocks. This
is carried out in a series of steps, the first one being blasting,
after which the material goes through a sequence of crushers. At each
step, the particles are screened, and if they are smaller than the
diameter of the mesh of a classifying grid, they go to the next crusher.
The process stops when the material has a sufficiently small size
(more precisely, small enough to enable physicochemical processing).

This fragmentation process is energetically costly (each crusher consumes
a certain quantity of energy to crush the material it is fed). One
of the problems that faces the mining industry is that of minimizing
the energy used. The optimisation parameters are the number of crushers
and the technical specifications of these crushers.

In \cite{bertoin-martinez-2005}, the authors propose a mathematical
model of what happens in a crusher. In this model, the rock pieces/fragments
are fragmented independently of each other, in a random and auto-similar
manner. This is consistent with what is observed in the industry,
and this is supported by the following publications: \cite{bird-perrier-2002,devoto-martinez-1998,weiss-1986,turcotte-1986}.
Each fragment has a size $s$ (in $\R^{+}$) and is then fragmented
into smaller fragments of sizes $s_{1}$, $s_{2}$, \ldots{} such
that the sequence $(s_{1}/s,s_{2}/s,\dots)$ has a law $\nu$ which
does not depend on $s$ (which is why the fragmentation is said to
be auto-similar). This law $\nu$ is called the \emph{dislocation
measure} (each crusher has its own dislocation measure). The dynamic
of the fragmentation process is thus modelized in a stochastic way.

In each crusher, the rock pieces are fragmented repetitively until
they are small enough to slide through a mesh whose holes have a fixed
diameter. So the fragmentation process stops for each fragment when
its size is smaller than the diameter of the mesh, which we denote
by $\epsilon$ ($\epsilon>0$). We are interested in the \emph{statistical
distribution }of the fragments coming out of a crusher. If we renormalize
the sizes of these fragments by dividing them by $\epsilon$, we obtain
a measure $\gamma_{-\log(\epsilon)}$, which we call the \emph{empirical
measure }(the reason for the index $-\log(\epsilon)$ instead of $\epsilon$
will be made clear later). In \cite{bertoin-martinez-2005}, the authors
show that the energy consumed by the crusher to reduce the rock pieces
to fragments whose diameters are smaller than $\epsilon$ can be computed
as an integral of a bounded function against the measure $\gamma_{-\log(\epsilon)}$
(they cite \cite{bond-1952,charles-1957,walker-lewis-mcadames-gilliland-1985}
on this particular subject). For each crusher, the empirical measure
$\gamma_{-\log(\epsilon)}$ is one of the two only observable variables
(the other one being the size of the pieces pushed into the grinder).
The specifications of a crusher are summarized in $\epsilon$ and
$\nu$.

\subsection{State of the art}

In \cite{bertoin-martinez-2005}, the authors show that the energy
consumed by a crusher to reduce rock pieces of a fixed size into fragments
whose diameter are smaller than $\epsilon$ behaves asymptotically
like a power of $\epsilon$ when $\epsilon$ goes to zero. More precisely,
this energy multiplied by a power of $\epsilon$ converges towards
a constant of the form $\kappa=\nu(\varphi)$ (the integral of $\nu$,
the dislocation measure, against a bounded function $\varphi$). In
\cite{bertoin-martinez-2005}, the authors also show a law of large
numbers for the empirical measure $\gamma_{-\log(\epsilon)}$. More
precisely, if $f$ is bounded continuous, $\gamma_{-\log(\epsilon)}(f)$
converges in law, when $\epsilon$ goes to zero, towards an integral
of $f$ against a measure related to $\nu$ (this result also appears
in \cite{hoffmann-krell-2011}, p. 399). We set $\gamma_{\infty}(f)$
to be this limit (check Equations (\ref{eq:def-gamma-infty}), (\ref{eq:def-eta}),
(\ref{eq:def-pi}) to get an exact formula). The empirical measure
$\gamma_{-\log(\epsilon)}$ thus contains information relative to
$\nu$ and one could extract from it an estimation of $\kappa$ or
of an integral of any function against $\nu$. 

It is worth noting that by studying what happens in various crushers,
we could study a family $(\nu_{i}(f_{j}))_{i\in I,j\in J}$ (with
an index $i$ for the number of the crusher and the index $j$ for
the $j$-th test function in a well-chosen basis). Using statistical
learning methods, one could from there make a prediction for $\nu(f_{j}$)
for a new crusher for which we know only the mechanical specifications
(shape, power, frequencies of the rotating parts \ldots{}). It would
evidently be interesting to know $\nu$ before even building the crusher.

In \cite{harris-knobloch-kyprianou-2010}, the authors prove a convergence
result for the empirical measure similar to the one in \cite{bertoin-martinez-2005},
the convergence in law being replaced by an almost sure convergence.
In \cite{hoffmann-krell-2011}, the authors give a bound on the rate
of this convergence, in a $L^{2}$ sense, under the assumption that
the fragmentation is conservative. This assumption means there is
no loss of mass due to the formation of dust during the fragmentation
process.\foreignlanguage{french}{}
\begin{figure}[h]
\selectlanguage{french}%
\[
\begin{array}{ccc}
\gamma_{-\log(\epsilon)} & \underset{\underset{\epsilon\rightarrow0}{\longrightarrow}}{\text{(bound on rate)}} & \gamma_{\infty}\\
 &  & \updownarrow\text{relation }\\
\text{energy}\times(\text{power of \ensuremath{\epsilon})} & \underset{\epsilon\rightarrow0}{\sim} & \kappa=\nu(\varphi)
\end{array}
\]

\caption{\foreignlanguage{american}{State of the art.}}
\selectlanguage{american}%
\end{figure}

So we have convergence results (\cite{bertoin-martinez-2005,harris-knobloch-kyprianou-2010})
of an empirical quantity towards constants of interest (a different
constant for each test function $f$). Using some transformations,
these constants could be used to estimate the constant $\kappa$.
Thus it is natural to ask what is the exact rate of convergence in
this estimation, if only to be able to build confidence intervals.
In \cite{hoffmann-krell-2011}, we only have a bound on the rate.

When a sequence of empirical measures converges to some measure, it
is natural to study the fluctuations, which often turn out to be Gaussian.
For such results in the case of empirical measures related to the
mollified Boltzmann equation, one can cite \cite{meleard-1998,uchiyama-1988,dawson-zheng-1991}.
When interested in the limit of a $n$-tuple as in Equation (\ref{eq:n-uplet})
below, we say we are looking at the convergence of a $U$-statistics.
Textbooks deal with the case where the points defining the empirical
measure are independent or with a known correlation (see \cite{delapena-gine-1999,dynkin-mandelbaum-1983,lee-1990}).
The problem is more complex when the points defining the empirical
measure are in interaction with each other like it is the case here.

\subsection{Goal of the paper}

As explained above, we want to obtain the rate of convergence in the
convergence of $\gamma_{-\log(\epsilon)}$ when $\epsilon$ goes to
zero. We want to produce a central-limit theorem of the kind: for
a bounded continuous $f$, $\epsilon^{\beta}(\gamma_{-\log(\epsilon)}(f)-\gamma_{\infty}(f))$
converges towards a non-trivial measure when $\epsilon$ goes to zero
(the limiting measure will in fact be Gaussian), for some exponent
$\beta$. The technics used will allow us to prove the convergence
towards a multivariate Gaussian of a vector of the kind 
\begin{equation}
\epsilon^{\beta}(\gamma_{-\log(\epsilon)}(f_{1})-\gamma_{\infty}(f_{1}),\dots,\gamma_{-\log(\epsilon)}(f_{n})-\gamma_{\infty}(f_{n}))\label{eq:n-uplet}
\end{equation}
for functions $f_{1}$, \ldots{}, $f_{n}$.

More precisely, if by $Z_{1}$, $Z_{2}$, \ldots{}, $Z_{N}$ we denote
the fragments sizes that go out from a crusher (with mesh diameter
equal to $\epsilon$). We would like to show that for a bounded continuous
$f$, 
\[
\gamma_{-\log(\epsilon)}(f):=\sum_{i=1}^{N}Z_{i}f(Z_{i})\longrightarrow\gamma_{\infty}(f)\text{, almost surely, when }\epsilon\rightarrow0\,,
\]
and that for all $n$, and $f_{1}$, \ldots{},$f_{n}$ bounded continuous
function such that $\gamma_{\infty}(f_{i})=0$,
\[
\epsilon^{\beta}(\gamma_{-\log(\epsilon)}(f_{1}),\dots,\gamma_{-\log(\epsilon)}(f_{n}))
\]
converges in law towards a multivariate Gaussian when $\epsilon$
goes to zero. 

The exact results are stated in Proposition \ref{prop:convergence-ps}
and Theorem \ref{thm:central-limit}.

\subsection{Outline of the paper}

We will state our assumptions along the way (Assumptions \ref{hyp:A},
\ref{hyp:conservative}, \ref{hyp:delta-step}, \ref{hyp:queue-pi}).
Assumption \ref{hyp:queue-pi} can be found at the beginning of Section
\ref{sec:Rate-of-convergence}. We define our model in Section \ref{sec:Statistical-model}.
The main idea is that we want to follow tags during the fragmentation
process. Let us imagine the fragmentation is the process of breaking
a stick (modeled by $[0,1]$) into smaller sticks. We suppose that
the original stick has painted dots and that during the fragmentation
process, we take note of the sizes of the sticks supporting the painted
dots (we call them the painted sticks). When the sizes of the painted
sticks get smaller than $\epsilon$ ($\epsilon>0$), the fragmentation
is stopped for these sticks. In Section \ref{sec:Rate-of-convergence},
we make use of classical results on renewal processes and of \cite{sgibnev-2002}
to show that the size of one painted stick has an asymptotic behavior
when $\epsilon$ goes to zero and that we have a bound on the rate
with which it reaches this behavior. Section \ref{sec:Limits-of-symmetric}
is the most technical. There we study the asymptotics of symmetric
functionals of the sizes of the painted sticks (always when $\epsilon$
goes to zero). In Section \ref{sec:Results}, we precisely define
the measure we are interested in ($\gamma_{T}$ with $T=-\log(\epsilon)$).
Using the results of Section \ref{sec:Limits-of-symmetric}, it is
then easy to show a law of large numbers for $\gamma_{T}$ (Proposition
\ref{prop:convergence-ps}) and a central-limit Theorem (Theorem \ref{thm:central-limit}).
Proposition \ref{prop:convergence-ps} and Theorem \ref{thm:central-limit}
are our two main results. The proof of Theorem \ref{thm:central-limit}
is based on a simple computation involving characteristic functions
(the same technique was already used in \cite{del-moral-patras-rubenthaler-2009,del-moral-patras-rubenthaler-2011a,del-moral-patras-rubenthaler-2011b,rubenthaler-2016}).

\subsection{Notations}

For $x$ in $\R$, we set $\lceil x\rceil=\inf\{n\in\Z\,:\,n\geq x\}$,
$\lfloor x\rfloor=\sup\{n\in\Z\,:\,n\leq x\}$. The symbol $\sqcup$
means ``disjoint union''. For $n$ in $\N^{*}$, we set $[n]=\{1,2,\dots,n\}$.
For $f$ an application from a set $E$ to a set $F$, we write $f:E\hookrightarrow F$
if $f$ is injective and, for $k$ in $\N^{*}$, if $F=E$, we set
\[
f^{\circ k}=\underset{k\mbox{ times }}{\underbrace{f\circ f\circ\dots\circ f}}
\]

\section{Statistical model\label{sec:Statistical-model}}

\subsection{Fragmentation chains}

Let $\epsilon>0$. Like in \cite{hoffmann-krell-2011}, we start with
the space 
\[
\mathcal{S}^{\downarrow}=\left\{ \mathbf{s}=(s_{1},s_{2},\dots),\,s_{1}\geq s_{2}\geq\dots\geq0,\,\sum_{i=1}^{+\infty}s_{i}\leq1\right\} \,.
\]
A fragmentation chain is a process in $\mathcal{S}^{\downarrow}$
characterized by 
\begin{itemize}
\item a dislocation measure $\nu$ which is a finite measure on $\mathcal{S}^{\downarrow}$,
\item a description of the law of the times between fragmentations.
\end{itemize}
A fragmentation chain with dislocation measure $\nu$ is a Markov
process $X=(X(t),t\geq0)$ with values in $\mathcal{S}^{\downarrow}$.
Its evolution can be described as follows: a fragment with size $x$
lives for some time (which may or may not be random) then splits and
gives rise to a family of smaller fragments distributed as $x\xi$,
where $\xi$ is distributed according to $\nu(.)/\nu(\mathcal{S}^{\downarrow})$.
We suppose the life-time of a fragment of size $x$ is an exponential
time of parameter $x^{\alpha}\nu(\mathcal{S}^{\downarrow})$, for
some $\alpha$. We could here make different assumptions on the life-time
of fragments, but this would not change our results. %

{} 

We denote by $\p_{m}$ the law of $X$ started from the initial configuration
$(m,0,0,\dots)$ with $m$ in $(0,1]$. The law of $X$ is entirely
determined by $\alpha$ and $\nu(.)$ (Theorem 3 of \cite{bertoin-2002}). 

We make the same assumption as in \cite{hoffmann-krell-2011} and
we will call it Assumption \ref{hyp:A}.

\begin{hypothesis}\label{hyp:A}

We have $\nu(\mathcal{S}^{\downarrow})=1$ and $\nu(s_{1}\in]0;1[)=1$.

\end{hypothesis}

Let
\[
\mathcal{U}:=\{0\}\cup\bigcup_{n=1}^{+\infty}(\N^{*})^{n}
\]
denote the infinite genealogical tree. For $u=(u_{1},\dots,u_{n})\in\mathcal{U}$
and $i\in\N^{*}$, we say that $u$ is in the $n$-th generation and
we write $|u|=n$, and we write $ui=(u_{1},\dots,u_{n},i)$, $u(k)=(u_{1},\dots,u_{k})$
for all $k\in[n]$. For any $u=(u_{1},\dots,u_{n})$ and $v=ui$ ($i\in\N^{*}$),
we say that $u$ is the ancestor of $v$. For any $u$ in $\mathcal{U}\backslash\{0\}$
($\mathcal{U}$ deprived of its root), $u$ has exactly one ancestor
and we denote it by $\mathbf{a}(u)$. The set $\mathcal{U}$ is ordered
alphanumerically~:
\begin{itemize}
\item If $u$ and $v$ are in $\mathcal{U}$ and $|u|<|v|$ then $u<v$.
\item If $u$ and $v$ are in $\mathcal{U}$ and $|u|=|v|=n$ and $u=(u_{1},\dots,u_{n})$,
$v=(v_{1},\dots,v_{n})$ with $u_{1}=v_{1}$, \dots{} , $u_{k}=v_{k}$,
$u_{k+1}<v_{k+1}$ then $u<v$.
\end{itemize}
A mark is an application from $\mathcal{U}$ to some other set. We
associate a mark on the tree $\mathcal{U}$ to each path of the process
$X$. The mark at node $u$ is $\xi_{u}$, where $\xi_{u}$ is the
size of the fragment indexed by $u$.%
{} The distribution of this random mark can be described recursively
as follows.
\begin{prop}
(Consequence \label{prop:(reformulation-of-Proposition} of Proposition
1.3, p. 25, \cite{bertoin-2006}) There exists a family of i.i.d.
variables indexed by the nodes of the genealogical tree, $((\widetilde{\xi}_{ui})_{i\in\N^{*}},u\in\mathcal{U})$,
where each $(\widetilde{\xi}_{ui})_{i\in\N^{*}}$ is distributed according
to the law $\nu(.)/\nu(\mathcal{S}^{\downarrow})$, and such that
the following holds:\\
Given the marks $(\xi_{v},|v|\leq n)$ of the first $n$ generations,
the marks at generation $n+1$ are given by 
\[
\xi_{ui}=\widetilde{\xi}_{ui}\xi_{u}\,,
\]
where $u=(u_{1,}\dots,u_{n})$ and $ui=(u_{1},\dots,u_{n},i)$ is
the $i-th$ child of $u$.
\end{prop}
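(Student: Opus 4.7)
The plan is to construct the desired iid family directly from the marks $(\xi_u)_{u\in\mathcal{U}}$ and then verify the joint distribution by induction on the generation, leveraging the branching (Markov) property of the fragmentation chain $X$. Under Assumption \ref{hyp:A} we have $\nu(\mathcal{S}^{\downarrow})=1$, so the target law for each sequence $(\widetilde{\xi}_{ui})_{i\in\N^{*}}$ is just $\nu$ itself, which simplifies the bookkeeping.

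Concretely, I would set $\widetilde{\xi}_{ui}:=\xi_{ui}/\xi_{u}$ on the event $\{\xi_{u}>0\}$ and, after enlarging the probability space if needed, define $(\widetilde{\xi}_{ui})_{i\in\N^{*}}$ on $\{\xi_{u}=0\}$ to be an independent $\nu$-distributed sequence (drawn so as to be independent of everything else). The identity $\xi_{ui}=\widetilde{\xi}_{ui}\xi_{u}$ then holds by construction. Writing $\mathcal{F}_{n}:=\sigma(\xi_{v}\,:\,|v|\leq n)$, the inductive claim to establish is: conditionally on $\mathcal{F}_{n}$, the collection $\bigl((\widetilde{\xi}_{ui})_{i\in\N^{*}}\bigr)_{|u|=n}$ is a family of independent $\nu$-distributed sequences. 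This follows from the branching property: conditionally on $\mathcal{F}_{n}$, the sub-fragmentations rooted at the fragments of size $\xi_{u}$ (for $|u|=n$) evolve independently, each one being a rescaled copy of the $\p_{\xi_{u}}$-fragmentation, and by auto-similarity (Theorem~3 of \cite{bertoin-2002}) the first splitting of such a fragment yields children of sizes $\xi_{u}\zeta$ with $\zeta\sim\nu$. Dividing by $\xi_{u}$ gives the desired distribution and independence across $u$, and iterating over $n$ provides a family that is iid across all of $\mathcal{U}$.

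The main technical obstacle is that the splittings in $X$ are not synchronized in time: each fragment of size $x$ waits an independent $\mathrm{Exp}(x^{\alpha})$ time before breaking, so ``generation $n+1$'' is not visited at a single stopping time of the continuous-time chain, and one cannot na\"ively invoke the strong Markov property at ``generation~$n$''. The cleanest workaround is to bypass synchronization altogether: order the splitting events chronologically (or equivalently, order the nodes alphanumerically and treat them one at a time), and apply the strong Markov property at each individual splitting event. The $\widetilde{\xi}_{ui}$ produced at each such event are, by construction, independent of the pre-split $\sigma$-algebra and independent of the life-times (which only depend on the previously revealed $\xi_{u}$). This is precisely the genealogical construction carried out in Proposition~1.3 of \cite{bertoin-2006}, which the present statement merely reformulates in a form convenient for the sequel.
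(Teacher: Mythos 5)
Your proposal is correct, but it is worth pointing out that the paper gives no written proof of this statement at all: it treats it as a direct consequence (essentially a restatement, with the birth times and lifetimes dropped) of Proposition 1.3, p.~25 of \cite{bertoin-2006}, where the fragmentation chain is \emph{constructed} from a marked tree whose successive dislocations are i.i.d.\ by definition, so there is nothing left to verify. You argue in the opposite direction: starting from the law of $X$ as a Markov process, you set $\widetilde{\xi}_{ui}=\xi_{ui}/\xi_{u}$ on $\{\xi_{u}>0\}$ (with an auxiliary independent $\nu$-draw on $\{\xi_{u}=0\}$, which is the right way to handle null fragments) and recover the i.i.d.\ structure via the branching property, applying the strong Markov property at each individual splitting event to circumvent the fact that the generations of the genealogical tree are not visited at a single (stopping) time of the continuous-time chain. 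Identifying and resolving that asynchrony issue split by split is the only delicate point, and you handle it correctly. What your route buys is a self-contained verification from the semigroup/jump description of $X$ (as characterized via Theorem~3 of \cite{bertoin-2002}); what it costs is that you are in effect re-deriving the genealogical construction that \cite{bertoin-2006} takes as the definition, which is why the paper can simply delegate the statement to the cited proposition, exactly as your closing remark acknowledges.
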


\subsection{Tagged fragments}

From now on, we suppose that we start with a block of size $m=1$.
We assume that the total mass of the fragments remains constant through
time, as follows.

\begin{hypothesis}\textbf{\label{hyp:conservative}(Conservative
property).}

We have $\nu(\sum_{i=1}^{+\infty}s_{i}=1)=1$.

\end{hypothesis}

\subsubsection{\label{subsec:First-definition}First definition}

We can now define tagged fragments.%
{} We use the representation of fragmentation chains as random infinite
marked tree to define a fragmentation chain with $q$ tagged fragments.
Suppose we have a fragmentation process $X$. On each node $u\in\mathcal{U}$,
we set a mark
\[
(\xi_{u},A_{u})\,,
\]
with $\xi_{u}$ defined as above and $A_{u}\subset[q]$, denoting
the tags present on the fragment labeled by $u$. The random variables
$(A_{u})_{u\in\mathcal{U}}$ are defined as follows.
\begin{itemize}
\item We set $A_{\{0\}}=[q]$.
\item We suppose we have i.i.d. random variables $((U_{u,j})_{j\in[q]},u\in\mathcal{U})$
of law $\mathcal{U}([0,1])$. For all $n\in\N$, given the marks of
the first $n$ generations, the marks at generation $n+1$ are given
by Proposition \ref{prop:(reformulation-of-Proposition} (concerning
$\xi_{.}$) and 
\[
A_{ui}=\{j\in A_{u}\,:\,\widetilde{\xi}_{u1}+\dots+\widetilde{\xi}_{u(i-1)}\leq U_{u,j}<\widetilde{\xi}_{u1}+\dots+\widetilde{\xi}_{u(i-1)}+\widetilde{\xi}_{ui}\}\,,\,\forall u\,:\,|u|=n\,,\,\forall i\in\N^{*}\,.
\]
We observe that, for all $j\in[q]$, $u\in\mathcal{U}$, $i\in\N^{*}$,
\begin{equation}
\p(j\in A_{ui}|j\in A_{u},\widetilde{\xi}_{ui})=\widetilde{\xi}_{ui}\,.\label{eq:proba-rester-dans-fragment}
\end{equation}
\end{itemize}
In the case $q=1$, the branch $\{u\in\mathcal{U}\,:\,A_{u}\neq\emptyset\}$
has the same law as the randomly tagged branch of Section 1.2.3 of
\cite{bertoin-2006}. The presentation is simpler in our case because
the Malthusian exponent is $1$ under Assumption \ref{hyp:conservative}.

\subsubsection{Second definition}

There is a different way to define the law of the random mark $(\xi_{u},A_{u})$,
which we will present now. This definition is strictly equivalent
to the first definition above. We take $(Y_{1},Y_{2},\dots,Y_{q})$
to be $q$ i.i.d. variables of law $\mathcal{U}([0,1])$. We set,
for all $u$ in $\mathcal{U}$,
\[
(\xi_{u},l_{u},A_{u})
\]
with $\xi_{u}$ defined as above. The random variables $A_{u}$ take
values in the subsets of $[q]$. The random variables $l_{u}$ take
values in $[0,1]$. These variables are defined as follows.
\begin{itemize}
\item We set $A_{\{0\}}=[q]$, $l_{\{0\}}=0$.
\item For all $n\in\N$, given the marks of the first $n$ generations,
the marks at generation $n+1$ are given by Proposition \ref{prop:(reformulation-of-Proposition}
(concerning $\xi_{.}$) and 
\[
l_{ui}=l_{u}+\xi_{u}(\widetilde{\xi}_{u1}+\widetilde{\xi}_{u2}+\dots+\widetilde{\xi}_{u(i-1)})\,,\,\forall u\,:\,|u|=n\,,\,\forall i\in\N^{*}\,,
\]
\[
k\in A_{ui}\text{ if and only if }Y_{k}\in[l_{ui},l_{ui}+\xi_{ui})\,,\,\forall u\,:\,|u|=n\,,\,\forall i\in\N^{*}\,.
\]
\end{itemize}
We obtain $(\xi_{u},A_{u})_{u\in\mathcal{U}}$ having the same law
as in Section \ref{subsec:First-definition}. So the two definitions
are equivalent.

\subsection{Observation scheme}

We ofreeze the process when the fragments become smaller than a given
threshold $\epsilon>0$. That is, we have the following data
\[
(\xi_{u})_{u\in\mathcal{U}_{\epsilon}}\,,
\]
where 
\[
\mathcal{U}_{\epsilon}=\{u\in\mathcal{U},\,\xi_{\mathbf{a}(u)}\geq\epsilon,\,\xi_{u}<\epsilon\}\,.
\]

We now look at $q$ tagged fragments ($q\in\N^{*}$). For each $i$
in $[q]$, we call $L_{0}^{(i)}=1$, $L_{1}^{(i)}$, $L_{2}^{(i)}$\dots{}
the successive sizes of the fragment having the tag $i$. More precisely,
for each $n\in\N^{*}$, there is almost surely exactly one $u\in\mathcal{U}$
such that $|u|=n$, $i\in A_{u}$; and so, $L_{n}^{(i)}=\xi_{u}$.
For each $i$, the process $S_{0}^{(i)}=-\log(L_{0}^{(i)})=0\leq S_{1}^{(i)}=-\log(L_{1}^{(i)})\leq\dots$
is a renewal process without delay, with waiting-time following a
law $\pi$ (see \cite{asmussen-2003}, Chapter V for an introduction
to renewal processes). This law $\pi$ is defined by the following.
\begin{equation}
\text{For all bounded measurable }f:[0,1]\rightarrow[0,+\infty)\,,\,\int_{\mathcal{S}^{\downarrow}}\sum_{i=1}^{+\infty}s_{i}f(s_{i})\nu(d\boldsymbol{s})=\int_{0}^{+\infty}f(e^{-x})\pi(dx)\,,\label{eq:def-pi}
\end{equation}
(see Proposition 1.6, p. 34 of \cite{bertoin-2006}, or Equations
(3), (4), p. 398 of \cite{hoffmann-krell-2011}). 

We make the following assumption on $\pi$. 

\begin{hypothesis}\label{hyp:delta-step}

There exist $a,b>0$ ($a<b$) such that the support of $\pi$ is $[a,b]$.
We set $\delta=e^{-b}$. 

\end{hypothesis}

We set 
\[
T=-\log(\epsilon)\,.
\]
 We set, for all $i\in[q]$, $t\geq0$, \textbf{
\begin{equation}
B_{t}^{(i)}=\inf\{S_{j}^{(i)}\,:\,S_{j}^{(i)}>t\}-t\,.\label{eq:def-B}
\end{equation}
}The process $B^{(i)}$ is a homogeneous Markov process (Proposition
1.5 p. 141 of \cite{asmussen-2003}). We call it the residual lifetime
of the fragment tagged by $i$. In the following, we will treat $t$
as a time parameter. This has nothing to do with the time in which
the fragmentation process $X$ evolves. 

We observe that, for all $t$, $(B_{t}^{(1)},\dots,B_{t}^{(q)})$
is exchangeable (meaning that for all $\sigma$ in the symmetric group
of order $q$, $(B_{t}^{(\sigma(1))},\dots,B_{t}^{(\sigma(q))})$
has the same law as $(B_{t}^{(1)},\dots,B_{t}^{(q)})$).

\subsection{Stationary age process\label{subsec:Stationary-age-process}}

We define $\widetilde{X}$ to be an independent copy of $X$. We suppose
it has $q$ tagged fragments. Therefore it has a mark $(\widetilde{\xi},\widetilde{A})$
and renewal processes $(\widetilde{S}_{k}^{(i)})_{k\geq0}$ (for all
$i$ in $[q]$) defined in the same way as for $X$. We let $(\widetilde{B}^{(1)},\widetilde{B}^{(2)})$
be the residual lifetimes of the fragments tagged by $1$ and $2$.

Let 
\[
\mu=\int_{0}^{+\infty}x\pi(dx)
\]
 and let $\pi_{1}$ be the distribution with density $x\mapsto x/\mu$
with respect to $\pi$. We set $\overline{C}$ to be a random variable
of law $\pi_{1}$. We set $U$ to be independent of $\overline{C}$
and uniform on $(0,1)$. We set $\widetilde{S}_{-1}=\overline{C}(1-U)$.
The process $\overline{S}_{0}=\widetilde{S}_{-1}$, $\overline{S}_{1}=\widetilde{S}_{-1}+\widetilde{S}_{0}^{(1)}$
, $\overline{S}_{2}=\widetilde{S}_{-1}+\widetilde{S}_{1}^{(1)}$,
$\overline{S}_{2}=\widetilde{S}_{-1}+\widetilde{S}_{2}^{(1)}$, \ldots{}
is a renewal process with delay $\pi_{1}$. We set $(\overline{B}_{t}^{(1)})_{t\geq0}$
to be its residual lifetime process~:
\begin{equation}
\overline{B}_{t}^{(1)}=\begin{cases}
\overline{C}(1-U)-t & \mbox{ if }t<\overline{S}_{0}\,,\\
\inf_{n\geq0}\{\overline{S}_{n}\,:\,\overline{S}_{n}>t\}-t & \mbox{ if }t\geq\overline{S}_{0}\,.
\end{cases}\label{eq:def-B-stationnaire}
\end{equation}
Theorem 3.3 p.151 of \cite{asmussen-2003} tells us that $(\overline{B}_{t}^{(1)})_{t\geq0}$
has the same transition as $(B_{t}^{(1)})_{t\geq0}$ defined above
and that $(\overline{B}_{t}^{(1)})_{t\geq0}$ is stationary. 

We define a measure $\eta$ on $\R^{+}$ by its action on bounded
measurable functions:
\begin{equation}
\text{For all bounded measurable }f\,:\,\R^{+}\rightarrow\R\,,\,\eta(f)=\frac{1}{\mu}\int_{\R^{+}}\E(f(Y-s)\1_{\{Y-s\geq0\}})ds\,,\,(Y\sim\pi)\,.\label{eq:def-eta}
\end{equation}
\begin{lem}
The measure $\eta$ is the law of $\overline{B}_{t}^{(1)}$ (for any
$t$).
\end{lem}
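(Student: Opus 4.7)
The plan is to use the stationarity property that was just cited (Theorem 3.3 p.151 of \cite{asmussen-2003}): since $(\overline{B}_t^{(1)})_{t \geq 0}$ is stationary, it suffices to identify the law of $\overline{B}_0^{(1)} = \overline{S}_0 = \overline{C}(1-U)$ and check that it coincides with $\eta$. The proof therefore reduces to two direct computations involving Fubini's theorem and a change of variable.

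First I would compute $\E(f(\overline{B}_0^{(1)}))$ for an arbitrary bounded measurable $f : \R^+ \to \R$. Using that $U \sim \mathcal{U}(0,1)$ is independent of $\overline{C} \sim \pi_1$ (density $c/\mu$ with respect to $\pi$),
\begin{equation*}
\E(f(\overline{C}(1-U))) = \int_0^{+\infty} \frac{c}{\mu}\,\pi(dc) \int_0^1 f(c(1-u))\,du.
\end{equation*}
The change of variable $v = c(1-u)$ (so $dv = -c\,du$) followed by Fubini turns this into
\begin{equation*}
\frac{1}{\mu} \int_0^{+\infty} \pi(dc) \int_0^c f(v)\,dv \; = \; \frac{1}{\mu}\int_0^{+\infty} f(v)\, \pi([v,+\infty))\,dv.
\end{equation*}

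Second, I would perform the analogous manipulation on the definition (\ref{eq:def-eta}). Writing $Y \sim \pi$ explicitly and applying Fubini's theorem,
\begin{equation*}
\eta(f) = \frac{1}{\mu} \int_0^{+\infty} \pi(dy) \int_0^y f(y-s)\,ds,
\end{equation*}
and the change of variable $w = y-s$ together with a second application of Fubini yields precisely $\frac{1}{\mu}\int_0^{+\infty} f(w)\,\pi([w,+\infty))\,dw$. Comparing the two expressions shows $\E(f(\overline{B}_0^{(1)})) = \eta(f)$, so $\overline{B}_0^{(1)} \sim \eta$, and stationarity then transfers this to $\overline{B}_t^{(1)}$ for every $t \geq 0$.

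There is no real obstacle here: the content is a standard identification of the stationary residual-lifetime density as $\overline{\pi}(v)/\mu$, with the size-biased variable $\overline{C}$ scaled by an independent uniform providing the correct joint law. The only care needed is to keep the two iterated integrals correctly oriented and to invoke the stationarity statement quoted from \cite{asmussen-2003} rather than re-deriving it.
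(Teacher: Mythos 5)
Your proof is correct and takes essentially the same route as the paper: both reduce, via the quoted stationarity, to identifying the law of $\overline{B}_{0}^{(1)}=\overline{C}(1-U)$ with $\eta$ through a Fubini/change-of-variable computation. The only cosmetic difference is that the paper tests against indicators $\1_{\{y\geq\xi\}}$ and transforms $\eta$ of such an indicator directly into $\p(\overline{C}(1-U)\geq\xi)$, whereas you test against general bounded measurable $f$ and match both sides to the common expression $\frac{1}{\mu}\int_{0}^{+\infty}f(v)\,\pi([v,+\infty))\,dv$.
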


\begin{proof}
Let $\xi\geq0$. We set $f(y)=\1_{y\geq\xi}$, for all $y$ in $\R$.
We have (with $Y$ of law $\pi$)
\begin{eqnarray*}
\frac{1}{\mu}\int_{\R^{+}}\E(f(Y-s)\1_{Y-s\geq0})ds & = & \frac{1}{\mu}\int_{\R^{+}}\left(\int_{0}^{y}\1_{y-s\geq\xi}ds\right)\pi(dy)\\
 & = & \frac{1}{\mu}\int_{\R^{+}}(y-\xi)_{+}\pi(dy)\\
 & = & \int_{\xi}^{+\infty}\left(1-\frac{\xi}{y}\right)\frac{y}{\mu}\pi(dy)\\
 & = & \p(\overline{C}(1-U)\geq\xi)\,.
\end{eqnarray*}
\end{proof}
For $v$ in $\R$, we now want to define a process $(\overline{B}_{t}^{(1),v})_{t\geq v}$
having the same transition as $B_{t}^{(1)}$ and being stationary.
We set $\overline{B}_{v}^{(1),v}$ such that it has the law $\eta$.
As we have given its transition, the process $(\overline{B}_{t}^{(1),v})_{t\geq v}$
is well defined in law. In addition, we suppose that it is independent
of all the other processes. 

For $v$ in $[0,T]$, we define a process $(\widehat{B}^{(1),v},\widehat{B}^{(2),v})$
such that $\widehat{B}^{(1),v}=B^{(1)}$ and $(\widehat{B}^{(1),v},\widehat{B}^{(2),v})$
has the law of $(B^{(1)},B^{(2)})$ conditioned on 
\[
\forall u\in\mathcal{U}\,,\,1\in A_{u}\Rightarrow[2\in A_{u}\Leftrightarrow-\log(\xi_{u})\leq v]\,,
\]
which reads as follows~: the tag $2$ remains on the fragment bearing
the tag $1$ until the size of the fragment is smaller than $e^{-v}$.
We observe that, conditionally on $\widehat{B}_{v}^{(1),v}$, $\widehat{B}_{v}^{(2),v}$:
$(\widehat{B}_{v+\widehat{B}_{v}^{(1),v}+t}^{(1),v})_{t\geq0}$ and
$(\widehat{B}_{v+\widehat{B}_{v}^{(2),v}+t}^{(2),v})_{t\geq0}$ are
independent. 

Let $k$ in $\N^{*}$ be such that 
\begin{equation}
(k-1)\times(b-a)\geq a\,.\label{eq:number-of-steps}
\end{equation}
 Now we state a small Lemma that will be useful below.
\begin{lem}
Let $v$ be in $\R.$ The variables $\overline{B}_{v}^{(1),v}$ and
$\widehat{B}_{kb}^{(1),kb}$ have the same support (and it is $[0,-\log(\delta)]$).
\end{lem}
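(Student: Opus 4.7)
The plan is to identify both supports with $[0,b]=[0,-\log(\delta)]$. Since by construction $\widehat{B}^{(1),v}=B^{(1)}$, the variable $\widehat{B}_{kb}^{(1),kb}$ has the same law as $B_{kb}^{(1)}$, and it suffices to compute the supports of $\overline{B}_v^{(1),v}$ and of $B_{kb}^{(1)}$ separately.

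For $\overline{B}_v^{(1),v}$: the previous lemma gives it law $\eta$, and its proof realises $\eta$ as the law of $\overline{C}(1-U)$, with $\overline{C}\sim\pi_1$ independent of $U\sim\mathcal{U}([0,1])$. Under Assumption \ref{hyp:delta-step}, $\pi$ has support $[a,b]$; since $\pi_1$ has density $x\mapsto x/\mu$ (strictly positive on $[a,b]$ because $a>0$) with respect to $\pi$, it shares this support. The product $\overline{C}(1-U)$ then has support $\{c(1-u):c\in[a,b],\,u\in[0,1]\}=[0,b]$.

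For $B_{kb}^{(1)}$: writing $B_{kb}^{(1)}=S_J^{(1)}-kb$ with $J=\inf\{j:S_j^{(1)}>kb\}$, the upper bound $B_{kb}^{(1)}\leq b$ is immediate since $\pi$-distributed increments are bounded by $b$. For the reverse inclusion, I fix $\beta\in[0,b]$ and plan to exhibit a vector $(y_1,\dots,y_{k+1})\in[a,b]^{k+1}$ satisfying $y_1+\cdots+y_k\leq kb$ and $y_1+\cdots+y_{k+1}=kb+\beta$. Invoking the fact that $\pi^{\otimes(k+1)}$ puts positive mass on every open neighborhood of such a vector in $[a,b]^{k+1}$ will then give $\p(B_{kb}^{(1)}\in(\beta-\epsilon,\beta+\epsilon))>0$ for every $\epsilon>0$, proving $\beta\in\mathrm{supp}(B_{kb}^{(1)})$. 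Two cases cover everything: for $\beta\in[a,b]$, take $y_1=\cdots=y_k=b$ and $y_{k+1}=\beta$; for $\beta\in[0,a)$, take $y_1=\cdots=y_k=b-(a-\beta)/k$ and $y_{k+1}=a$.

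The main obstacle is the second case, where the membership $y_i\in[a,b]$ for $i\leq k$ reduces to $k(b-a)\geq a-\beta$. This is precisely what Equation (\ref{eq:number-of-steps}) guarantees, since $(k-1)(b-a)\geq a$ yields $k(b-a)\geq a+(b-a)>a\geq a-\beta$. Without (\ref{eq:number-of-steps}), $S_k^{(1)}$ could not descend as low as $kb-a$, a subsequent $\pi$-increment near $a$ could not generate residual lifetimes close to $0$, and the lower end of the support of $B_{kb}^{(1)}$ would strictly exceed that of $\overline{B}_v^{(1),v}$.
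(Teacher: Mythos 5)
Correct, and essentially the paper's own argument: you identify the law of $\overline{B}_{v}^{(1),v}$ with $\eta$, whose support is $[0,b]$ via the representation $\overline{C}(1-U)$, and you show the support of $\widehat{B}_{kb}^{(1),kb}=B_{kb}^{(1)}$ fills $[0,b]$ by letting the first block of increments land just far enough below the deterministic time (possible exactly because of Equation (\ref{eq:number-of-steps})) and one further increment in $[a,b]$ hit any prescribed residual, which is the same mechanism the paper runs through the supports of $S_{k-1}^{(1)}$ and the independent increment $S_{k}^{(1)}-S_{k-1}^{(1)}$. One cosmetic point: at $\beta=0$ not every vector in the chosen neighborhood yields a small residual (those with $y_{1}'+\dots+y_{k+1}'\leq kb$ do not), so either restrict to the positive-mass portion of the neighborhood where the total sum exceeds $kb$ (available since your first $k$ coordinates lie strictly below $b$), or observe that supports are closed so covering $(0,b]$ already suffices.
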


\begin{proof}
{} %
{} By Equation (\ref{eq:def-B-stationnaire}), the support of $\eta$
is $[0,b]$; and so the support of $\overline{B}_{v}^{(1),v}$ is
$[0,b]$. By Assumption \ref{hyp:delta-step}, the support of $S_{k-1}^{(1)}$
is $[(k-1)a,(k-1)b]$ and the support of $S_{k}^{(1)}-S_{k-1}^{(1)}$
is $[a,b]$. If $S_{k}^{(1)}>(k-1)b$ then $B_{kb}^{(1)}=S_{k}^{(1)}-S_{k-1}^{(1)}-((k-1)b-S_{k-1}^{(1)})$.
As $S_{k-1}^{(1)}$ and $S_{k}^{(1)}-S_{k-1}^{(1)}$ are independent,
we  get that the support of $B_{kb}^{(1)}$ includes $[0,b]$ (because
of Equation (\ref{eq:number-of-steps})). As this support is included
in $[0,b]$, we have proved the desired result.
\end{proof}
For $v$ in $\R$, we define a process $(\overline{B}_{t}^{(2),v})_{t\geq v}$
by: $(\overline{B}_{t}^{(1),v},\overline{B}_{t}^{(2),v})_{t\geq v}$
has the law of 
\[
(\widehat{B}_{t-v+kb}^{(1),kb},\widehat{B}_{t-v+kb}^{(2),kb})_{t\geq v}
\]
 conditioned on $(\widehat{B}_{t-v+kb}^{(1),kb})_{t\geq v}=(\overline{B}_{t}^{(1),v})_{t\ge v}$.
This conditioning is correct because $\overline{B}_{v}^{(1),v}$ and
$\widehat{B}_{kb}^{(1),kb}$ have the same support. 

\section{\label{sec:Rate-of-convergence}Rate of convergence in the Key Renewal
Theorem}

We need the following regularity assumption. 

\begin{hypothesis}\label{hyp:queue-pi}

The probability $\pi(dx)$ is absolutely continuous with respect to
the Lebesgue measure (we will write $\pi(dx)=\pi(x)dx$). The density
function $x\mapsto\pi(x)$ is continuous on $(0;+\infty)$. 

\end{hypothesis}
\begin{fact}
Let $\theta>1$ ($\theta$ is fixed in the rest of the paper). The
density $\pi$ satisfies 
\[
\limsup_{x\rightarrow+\infty}\exp(\theta x)\pi(x)<+\infty\,.
\]
\end{fact}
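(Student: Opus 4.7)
The plan is very short. The key observation is that Assumption \ref{hyp:delta-step} forces the support of $\pi$ to be the compact interval $[a,b]$, so the continuous density $\pi$ vanishes identically on $(b,+\infty)$. Consequently the function $x \mapsto \exp(\theta x)\pi(x)$ is identically zero for every $x > b$, and therefore its limsup at infinity equals $0$, which is a fortiori finite.

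More explicitly, I would fix any $\theta > 1$ and argue that for every $x > b$ one has $\pi(x) = 0$ (since the support of $\pi$ is contained in $[a,b]$), whence $\exp(\theta x)\pi(x) = 0$; taking the limsup as $x \to +\infty$ along values $x > b$ yields $0$, and the claim $\limsup_{x\to+\infty}\exp(\theta x)\pi(x) < +\infty$ follows trivially.

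There is no substantive obstacle here. The Fact is recorded because later sections (notably the rate estimates in Section \ref{sec:Rate-of-convergence} relying on \cite{sgibnev-2002}) will invoke an exponential-tail bound of Cramér type on $\pi$. A reader accustomed to general renewal theory might anticipate a delicate tail-decay argument, but Assumption \ref{hyp:delta-step} short-circuits all of it: compact support of $\pi$ makes every exponential moment bound automatic, and this Fact merely packages the trivial consequence in the precise form that subsequent proofs will cite.
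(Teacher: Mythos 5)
Your proposal is correct and matches the paper's (implicit) justification: the paper states this Fact without any proof precisely because, under Assumption \ref{hyp:delta-step}, the continuous density $\pi$ vanishes on $(b,+\infty)$, so $\exp(\theta x)\pi(x)=0$ there and the limsup is $0$. Nothing further is needed.
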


For $\varphi$ a nonnegative Borel-measurable function on $\R$, we
set $S(\varphi)$ to be the set of complex-valued measures $\kappa$
(on the Borelian sets) such that $\int_{\R}\varphi(x)|\kappa|(dx)<\infty$,
where $|\kappa|$ stands for the total variation norm. If $\kappa$
is a finite complex-valued measure on the Borelian sets of $\R$,
we define $T\kappa$ to be the $\sigma$-finite measure with the density
\[
v(x)=\begin{cases}
\kappa((x,+\infty)) & \text{ if }x\geq0\,,\\
-\kappa((-\infty,x]) & \text{ if }x<0\,.
\end{cases}
\]
Let $F$ be the cumulative distribution function of $\pi$. %

We set $B_{t}=B_{t}^{(1)}$ (see Equation (\ref{eq:def-B}) for the
definition of $B^{(1)}$, $B^{(2)}$, \ldots{}). By Theorem 3.3 p.151
and Theorem 4.3 p. 156 of \cite{asmussen-2003}, we know that $B_{t}$
converges in law to a random variable $B_{\infty}$(of law $\eta$).
The following Theorem is a consequence of \cite{sgibnev-2002}, Theorem
5.1, p. 2429. It shows there is actually a rate of convergence for
this convergence in law. 
\begin{thm}
\label{lem:sgibnev}Let $\epsilon'\in(0,\theta)$ . Let 
\[
\varphi(x)=\begin{cases}
e^{(\theta-\epsilon')x} & \text{ if }x\geq0\,,\\
1 & \text{ if }x<0\,.
\end{cases}
\]
 %
If $Y$ is a random variable of law $\pi$ then
\[
\sup_{\alpha\,:\,|\alpha|\leq M}\left|\E(\alpha(B_{t}))-\frac{1}{\mu}\int_{\R^{+}}\E(\alpha(Y-s)\1_{\{Y-s\geq0\}})ds\right|=o\left(\frac{1}{\varphi(t)}\right)
\]
as $t$ approaches $+\infty$ outside a set of Lebesgue measure zero
(the supremum is taken on $\alpha$ in the set of Borel-measurable
functions on $\R$). 
\end{thm}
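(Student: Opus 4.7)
The plan is to reduce the statement to a renewal equation and then invoke Sgibnev's theorem directly. For a bounded Borel function $\alpha$ on $\R$, set $g_\alpha(t)=\E(\alpha(B_t))$ and
\[
z_\alpha(t)=\int_{t}^{+\infty}\alpha(y-t)\,\pi(dy)=\E\bigl(\alpha(Y-t)\1_{\{Y\geq t\}}\bigr),\qquad Y\sim\pi.
\]
Conditioning on the first renewal epoch $S_1^{(1)}$ gives the classical renewal equation
\[
g_\alpha(t)=z_\alpha(t)+\int_{0}^{t}g_\alpha(t-s)\,\pi(ds),\qquad t\geq 0,
\]
whose unique bounded solution is $g_\alpha=U\ast z_\alpha$ where $U=\sum_{n\geq 0}\pi^{\ast n}$ is the renewal measure. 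The limit in the statement is exactly $\frac{1}{\mu}\int_{\R^{+}}z_\alpha(s)\,ds$, so we must control $(U\ast z_\alpha)(t)-\frac{1}{\mu}\int z_\alpha$ uniformly in $|\alpha|\leq M$.

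The second step is to verify the hypotheses of Theorem 5.1 of \cite{sgibnev-2002} with the weight $\varphi$ defined in the statement. Assumption \ref{hyp:queue-pi} (absolute continuity with continuous density) implies that $\pi$ is spread out, hence non-lattice, so $F$ lies in the class of distributions to which Sgibnev's result applies. The Fact preceding the theorem yields
\[
\int_{\R^{+}}\varphi(x)\,\pi(dx)=\int_{\R^{+}}e^{(\theta-\epsilon')x}\pi(x)\,dx<\infty,
\]
so that $\pi\in S(\varphi)$ and has finite exponential moment of order $\theta-\epsilon'$. The spectral condition required by Sgibnev (that the Laplace transform of $\pi$ does not equal $1$ on the appropriate strip apart from the origin) is a standard consequence of $\pi$ being a probability and spread out.

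Third, I would check that the forcing term $z_\alpha$ belongs to $S(\varphi)$ with a norm that is uniform in $|\alpha|\leq M$. Indeed, $|z_\alpha(t)|\leq M\,\pi([t,+\infty))$ for all $t\geq 0$, and $|z_\alpha(t)|\leq M$ for $t<0$, so
\[
\int_{\R}\varphi(t)|z_\alpha(t)|\,dt\leq M\int_{0}^{+\infty}e^{(\theta-\epsilon')t}\pi([t,+\infty))\,dt+M\int_{-\infty}^{0}|z_\alpha(t)|\,dt.
\]
The first integral is finite by Fubini and the exponential tail bound of the Fact; the second contribution vanishes since $z_\alpha$ is supported on $[0,+\infty)$. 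Thus $z_\alpha$ lies in a fixed ball of the weighted space of radius $CM$ independent of $\alpha$. This is precisely where the uniformity over $\alpha$ (and hence the supremum in the statement) is produced.

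The final step is to apply Sgibnev's theorem, which states that for $z\in S(\varphi)$ the quantity $(U\ast z)(t)-\frac{1}{\mu}\int z$ is $o(1/\varphi(t))$ as $t\to+\infty$ outside a set of Lebesgue measure zero, with the implied $o$ depending only on the weighted norm of $z$. Since we have just bounded that norm uniformly by $CM$, the conclusion
\[
\sup_{|\alpha|\leq M}\Bigl|\E(\alpha(B_t))-\tfrac{1}{\mu}\int_{\R^{+}}\E(\alpha(Y-s)\1_{\{Y-s\geq 0\}})\,ds\Bigr|=o\!\left(\tfrac{1}{\varphi(t)}\right)
\]
follows immediately. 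The main obstacle is the careful verification that Sgibnev's hypotheses (in particular the spectral/Wiener--Levy condition in his weighted algebra) are met with our $\pi$ and $\varphi$; once they are, translating his abstract statement into our setting and extracting uniformity over $\alpha$ is straightforward.
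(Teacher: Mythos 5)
Your proposal is correct and follows essentially the same route as the paper: both express $\E(\alpha(B_t))$ as the renewal convolution of $U=\sum_{n\geq0}\pi^{*n}$ with the forcing term $z_\alpha(t)=\E(\alpha(Y-t)\1_{\{Y\geq t\}})$, bound that term by $M\,\p(Y\geq t)$ via the exponential tail of $\pi$ so that the hypotheses of Theorem 5.1 of Sgibnev hold for the weight $\varphi$, and then conclude. The only cosmetic difference is that the paper writes the convolution identity directly by summing over renewal epochs and secures uniformity in $\alpha$ by dominating $z_\alpha$ pointwise by the forcing function corresponding to the constant $M$, rather than through a uniform weighted-norm bound.
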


\begin{proof}
{} Let $*$ stands for the convolution product. We define the renewal
measure $U(dx)=\sum_{n=0}^{+\infty}\pi^{*n}(dx)$ (notations: $\pi^{*0}(dx)=\delta_{0}$,
the Dirac mass at $0$, $\pi^{*n}=\pi*\pi*\dots*\pi$ ($n$ times)).
 We take i.i.d. variables $X,X_{1},X_{2}\dots$ of law $\pi$. We
set $f(x)=M$, for all $x$ in $\R$. We have, for all $t\geq0$,
\begin{eqnarray*}
\E(f(B_{t})) & = & \E\left(\sum_{n=0}^{+\infty}f(X_{1}+X_{2}+\dots+X_{n+1}-t)\1_{\{X_{1}+\dots+X_{n}<t\leq X_{1}+\dots+X_{n+1}\}}\right)\\
 & = & \int_{0}^{t}\E(f(s+X-t)\1_{\{s+X-t\geq0\}})U(ds)\,.
\end{eqnarray*}
We set 
\[
g(t)=\begin{cases}
\E(f(X-t)\1_{\{X-t\geq0\}}) & \text{ if }t\geq0\,,\\
0 & \text{ if }t<0\,.
\end{cases}
\]
{} We have, for all $t\geq0$,
\begin{eqnarray*}
\left|\E(f(X-t)\1_{\{X-t\geq0\}})\right| & \leq & \Vert f\Vert_{\infty}\p(X\geq t)\\
 & \leq & \Vert f\Vert_{\infty}e^{-(\theta-\frac{\epsilon'}{2})t}\E(e^{(\theta-\frac{\epsilon'}{2})X})\,.
\end{eqnarray*}
We have: $\E(e^{(\theta-\frac{\epsilon'}{2})X})<\infty$. The function
$\varphi$ is submultiplicative and it is such that 
\[
\lim_{x\rightarrow-\infty}\frac{\log(\varphi(x))}{x}=0\leq\lim_{x\rightarrow+\infty}\frac{\log(\varphi(x))}{x}=\theta-\epsilon'\,.
\]
 The function $g$ is in $L^{1}(\R)$. The function $g.\varphi$ is
in $L^{\infty}(\R)$. We have $g(x)\varphi(x)\rightarrow0$ as $|x|\rightarrow\infty$.
We have
\[
\varphi(t)\int_{t}^{+\infty}|g(x)|dx\underset{t\rightarrow+\infty}{\longrightarrow}0\,,\,\varphi(t)\int_{-\infty}^{t}|g(x)|dx\underset{t\rightarrow-\infty}{\longrightarrow}0\,.
\]
We have $T^{2}(\pi)\in S(\varphi)$. 

Let us now take  a function $\alpha$ such that $|\alpha|\leq M$.
We set
\[
\widehat{\alpha}(t)=\begin{cases}
\E(\alpha(X-t)\1_{\{X-t\geq0\}}) & \text{ if }t\geq0\,,\\
0 & \text{ if }t<0\,.
\end{cases}
\]
Then we have $|\widehat{\alpha}|\leq|g|$ and (computing as above
for $f$)
\begin{eqnarray*}
\E(\alpha(B_{t})) & = & \widehat{\alpha}*U(t)
\end{eqnarray*}

So, by \cite{sgibnev-2002}, Theorem 5.1, we have proved the desired
result. 
\end{proof}
\begin{cor}
\label{cor:sgibnev}There exists a constant $\Gamma_{1}$ bigger than
$1$ such that: for any bounded measurable function $F$ on $\R$
such that $\eta(F)=0$,
\[
|\E(F(B_{t}))|\leq\Vert F\Vert_{\infty}\times\frac{\Gamma_{1}}{\varphi(t)}
\]
for $t$ outside a set of Lebesgue measure zero.
\end{cor}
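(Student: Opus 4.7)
The plan is to apply Theorem \ref{lem:sgibnev} directly to $F$ (or more precisely to $F/\Vert F\Vert_\infty$) and then to upgrade the qualitative $o(1/\varphi(t))$ statement to a uniform bound by controlling the behaviour on an initial compact interval separately.

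First I would assume without loss of generality that $\Vert F\Vert_\infty>0$ (otherwise the inequality is trivial), and set $\alpha=F/\Vert F\Vert_\infty$, which satisfies $|\alpha|\leq 1$. By definition of $\eta$ (Equation (\ref{eq:def-eta})), the assumption $\eta(F)=0$ exactly says that
\[
\frac{1}{\mu}\int_{\R^{+}}\E(\alpha(Y-s)\1_{\{Y-s\geq 0\}})\,ds=0\,,
\]
so Theorem \ref{lem:sgibnev} (applied with $M=1$) gives a function $r(t)$ with $\varphi(t)r(t)\to 0$ outside a Lebesgue null set and
\[
|\E(\alpha(B_t))|\leq r(t)\quad\text{for all such }t\,.
\]
Multiplying by $\Vert F\Vert_\infty$ yields $|\E(F(B_t))|\leq \Vert F\Vert_\infty\,r(t)$.

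Next I would produce the constant $\Gamma_1$. Pick $t_0\geq 0$ large enough that $\varphi(t)r(t)\leq 1$ for every $t\geq t_0$ outside the null set. For $t\in[0,t_0]$ (outside the same null set) one has the trivial bound $|\E(\alpha(B_t))|\leq 1$, and since $\varphi$ is continuous and equal to $e^{(\theta-\epsilon')t}$ on $\R^{+}$, it is bounded by $\varphi(t_0)$ on $[0,t_0]$. Setting
\[
\Gamma_1:=\max(1,\varphi(t_0))\,,
\]
we obtain $\varphi(t)|\E(\alpha(B_t))|\leq\Gamma_1$ for all $t\geq 0$ outside the null set, and multiplying back through by $\Vert F\Vert_\infty$ gives the statement of the corollary.

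The only non-routine point is making sure the uniformity over $F$ really comes from Theorem \ref{lem:sgibnev}: that theorem provides a supremum over all $\alpha$ with $|\alpha|\leq M$ rather than one $\alpha$ at a time, so the rate $r(t)$ can be chosen \emph{independently} of $F$, which is what allows a single constant $\Gamma_1$ to work for every bounded measurable $F$ with $\eta(F)=0$. No other difficulty is expected.
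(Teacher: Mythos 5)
Your proposal is correct and follows essentially the same route as the paper: apply Theorem \ref{lem:sgibnev} with $M=1$, identify the integral term with $\eta(\alpha)$, normalize $F$ by $\Vert F\Vert_{\infty}$, and extract a uniform constant $\Gamma_{1}$. The only difference is that you spell out the step the paper leaves implicit (converting the $o(1/\varphi(t))$ statement into a constant via the trivial bound on a compact time interval and the monotonicity of $\varphi$), which is a harmless and correct elaboration.
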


\begin{proof}
We take $M=1$ in the above Theorem. Keep in mind  that $\eta$ is
defined in Equation (\ref{eq:def-eta}). There exists a constant $\Gamma_{1}$
such that: for all measurable function $\alpha$ such that $\Vert\alpha\Vert_{\infty}\leq1$,
\begin{equation}
\left|\E(\alpha(B_{t}))-\eta(\alpha)\right|\leq\frac{\Gamma_{1}}{\varphi(t)}\,\text{(for }t\text{ outside a set of Lebesque measure zero).}\label{eq:rate-01}
\end{equation}
Let us now take a bounded measurable $F$ such that $\eta(F)=0$.
By Equation (\ref{eq:rate-01}), we have (for $t$ outside a set of
Lebesgue measure zero)
\begin{eqnarray*}
\left|\E\left(\frac{F(B_{t})}{\Vert F\Vert_{\infty}}\right)-\eta\left(\frac{F}{\Vert F\Vert_{\infty}}\right)\right| & \leq & \frac{\Gamma_{1}}{\varphi(t)}\\
|\E(F(B_{t}))| & \leq & \Vert F\Vert_{\infty}\times\frac{\Gamma_{1}}{\varphi(t)}\,.
\end{eqnarray*}
\end{proof}

\section{\label{sec:Limits-of-symmetric}Limits of symmetric functionals }

\subsection{Notations}

We fix $q\in\N^{*}$. We set $\mathcal{S}_{q}$ to be the symmetric
group of order $q$. A function $F:\R^{q}\rightarrow\R$ is symmetric
if
\[
\forall\sigma\in\mathcal{S}_{q}\,,\,\forall(x_{1},\dots,x_{q})\in\R^{q}\,,\,F(x_{\sigma(1)},x_{\sigma(2)},\dots,x_{\sigma(q)})=F(x_{1},x_{2},\dots,x_{q})\,.
\]
For $F:\R^{q}\rightarrow\R$, we define a symmetric version of $F$
by
\[
F_{\text{sym}}(x_{1},\dots,x_{q})=\frac{1}{q!}\sum_{\sigma\in\mathcal{S}_{q}}F(x_{\sigma(1)},\dots,x_{\sigma(q)})\,,\,\text{for all }(x_{1},\dots,x_{q})\in\R^{q}\,.
\]
We set $\mathcal{B}_{\text{sym }}(q)$ to be the set of bounded, measurable,
symmetric functions $F$ on $\R^{q}$, and we set $\mathcal{B}_{\text{sym }}^{0}(q)$
to be the $F$ of $\mathcal{B}_{\text{sym }}(q)$ such that 
\[
\int_{x_{1}}F(x_{1},x_{2},\dots,x_{q})\eta(dx_{1})=0\,,\,\forall(x_{2},\dots,x_{q})\in\R^{q-1}\,.
\]
We set
\[
L_{T}=\sum_{u\in\mathcal{U}_{\epsilon}:A_{u}\neq\emptyset}(\#A_{u}-1)\,.
\]
Suppose that $k$ is in $[q]$ and $l\geq1$. %
For $t$ in $[0,T]$, we consider the following collections of nodes
of $\mathcal{U}$~:
\[
\mathcal{T}_{1}=\{u\in\mathcal{U}\backslash\{0\}\,:\,A_{u}\neq\emptyset\,,\,\xi_{\mathbf{a}(u)}\geq\epsilon\}\cup\{0\}\,,
\]
\[
S(t)=\{u\in\mathcal{T}_{1}\,:\,-\log(\xi_{\mathbf{a}(u)})\leq t\,,\,-\log(\xi_{u})>t\}\,.
\]
We set $\mathcal{L}_{1}$ to be the set of leaves in the tree $\mathcal{T}_{1}$.
For $t$ in $[0,T]$ and $i$ in $[q]$, there exists one and only
one $u$ in $S(t)$ such that $i\in A_{u}$. We call it $u\{t,i\}$.
Under Assumption \ref{hyp:delta-step}, there exists a constant bounding
the numbers vertices of $\mathcal{T}_{1}$ almost surely. Let us look
at an example in Figure \ref{fig:Tree-and-marks}. 
\begin{figure}[h]
\begin{centering}
\includegraphics[scale=0.75]{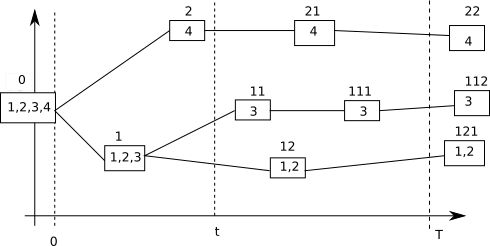}
\par\end{centering}
\caption{\label{fig:Tree-and-marks}Tree and marks}
\end{figure}
Here, we have a graphic representation of a realization of $\mathcal{T}_{1}$.
Each node $u$ of $\mathcal{T}_{1}$ is written above a rectangular
box in which we read $A_{u}$; the right side of the box has the coordinate
$-\log(\xi_{u})$ on the $X$-axis. For simplicity, the node $(1,1)$
is designated by $11$, the node $(1,2)$ is designated by $12$,
and so on. In this example: $\mathcal{T}_{1}=\{0,(1),(2),(1,1),(2,1),(1,2),(1,1,1),(2,2),(1,1,2),(1,2,1)\}$,
$\mathcal{L}_{1}=\{(2,2),(1,1,2),(1,2,1)\}$, $A_{(1)}=\{1,2,3\}$,
$A_{(1,2)}=\{1,2\}$, \dots , $S(t)=\{(1,2),(1,1),(2,1)\}$, $u\{t,1\}=(1,2)$,
$u\{t,2\}=(1,2)$, $u\{t,3\}=(1,1)$, $u\{t,4\}=(2,1)$.

For $k$, $l$ in $\N$, we define the event 
\[
C_{k,l}(t)=\{\sum_{u\in S(t)}\1_{\#A_{u}=1}=k\,,\,\sum_{u\in S(t)}(\#A_{u}-1)=l\}\,.
\]
For example, in Figure \ref{fig:Tree-and-marks}, we are in the event
$C_{2,1}(t)$. 

We define
\[
\mathcal{T}_{2}=\{u\in\mathcal{T}_{1}\backslash\{0\}\,:\,\#A_{\mathbf{a}(u)}\geq2\}\cup\{0\}\,,
\]
\[
m_{2}\,:\,u\in\mathcal{T}_{2}\mapsto(\xi_{u},\inf\{i,i\in A_{u}\})\,.
\]
For example, in Figure \ref{fig:Tree-and-marks}, $\mathcal{T}_{2}=\{(0),(1),(2),(1,1),(1,2),(1,2,1)\}$.
Let $\alpha$ be in $(0,1)$. We observe that $C_{k,l}(\alpha T)$
is measurable with respect to $(\mathcal{T}_{2},m_{2})$ if $T-\alpha T>b$
(we suppose that this is the case in the following). We set, for all
$u$ in $\mathcal{T}_{2}$, $T_{u}=-\log(\xi_{u})$. %
{} Let $\mathcal{L}_{2}$ be the set of leaves $u$ in the tree $\mathcal{T}_{2}$
such that the set $A_{u}$ has a single element $n_{u}$. For example,
in Figure \ref{fig:Tree-and-marks}, $\mathcal{L=}\{(2),(1,1)\}$.

For $q$ even ($q=2p$) and for all $t$ in $[0,T]$, we define the
events
\[
P_{t}=\{\forall i\in[p]\,,\,\exists u_{i}\in\mathcal{U}\,:\,\xi_{u_{i}}<e^{-t}\,,\,\xi_{\mathbf{a}(u_{i})}\geq e^{-t}\,,\,A_{u_{i}}=\{2i-1,2i\}\}\,,
\]

\[
\forall i\in[p]\,,\,P_{i,i+1}(t)=\{\exists u\in S(t)\,:\,\{2i-1,2i\}\subset A_{u}\}\,.
\]
We set, for all $t$ in $[0,T]$, 
\[
\mathcal{F}_{S(t)}=\sigma(S(t),(\xi_{u},A_{u})_{u\in S(t)})\,.
\]

\subsection{Intermediate results }
\begin{lem}
\label{lem:conv-nouille-libre}We suppose that $F$ is in $\Bsym(q)$
and that $F$ is of the form $F=(f_{1}\otimes f_{2}\otimes\dots\otimes f_{q})_{sym}$.
Let $A$ be in $\sigma(\mathcal{L}_{2})$. For any $\alpha$ in $]0,1[$,
$k$ in $[q]$ and $l$ in $\{0,1,\dots,(q-k-1)_{+}\}$, we have
\[
|\E(\1_{C_{k,l}(\alpha T)}\1_{A}F(B_{T}^{(1)},B_{T}^{(2)},\dots,B_{T}^{(q)}))|\leq\Vert F\Vert_{\infty}\Gamma_{1}^{q}C_{tree}(q)\left(\frac{1}{\delta}\right)^{q}\epsilon^{q/2}\,,
\]
(for a constant $C_{tree}(q)$ defined below in the proof) and
\[
\]
\[
\epsilon^{-q/2}\E(\1_{C_{k,l}(\alpha T)}\1_{A}F(B_{T}^{(1)},B_{T}^{(2)},\dots,B_{T}^{(q)}))\underset{\epsilon\rightarrow0}{\longrightarrow}0\,.
\]
\end{lem}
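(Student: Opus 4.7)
The strategy is to condition on the state of the fragmentation at ``time'' $\alpha T$ (enriched by the separation data encoded in $\sigma(\mathcal{L}_2)$), decouple the $q$ residual-lifetime processes via the branching property of the fragmentation chain, and apply Corollary \ref{cor:sgibnev} once for each tag. Expanding $F=\tfrac{1}{q!}\sum_{\sigma\in\mathcal{S}_q}\bigotimes_{i=1}^q f_{\sigma(i)}$ reduces by linearity to bounding $\E[\1_{C_{k,l}(\alpha T)}\1_A\prod_i f_{\sigma(i)}(B_T^{(i)})]$ for each fixed $\sigma$; the factor $q!$ from symmetrization will eventually be absorbed into $C_{tree}(q)$. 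Since $F\in\Bsym(q)=\mathcal{B}^0_{\mathrm{sym}}(q)$ and the reduction respects this zero-mean condition (one can assume each $f_j$ has $\eta$-mean zero after a routine ANOVA-style decomposition), Corollary \ref{cor:sgibnev} is applicable to each $f_{\sigma(i)}$.

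Condition then on $\mathcal{G}:=\mathcal{F}_{S(\alpha T)}\vee\sigma(\mathcal{L}_2)$. On $C_{k,l}(\alpha T)$, $\mathcal{G}$ reveals (i) the composition of $S(\alpha T)$, namely $k$ single-tag nodes and $m=q-k-l$ multi-tag nodes together with their sizes $\xi_u$ and tag-sets $A_u$, and (ii) through $\mathcal{L}_2$, for each tag $i\in[q]$ the first ``$-\log$-size'' time $\tau_i\in[\alpha T,T]$ at which tag $i$ is alone on its fragment. The branching property (Proposition \ref{prop:(reformulation-of-Proposition}) yields mutual conditional independence across $i$ of the residual-lifetime processes $(B_t^{(i)})_{t\geq\tau_i}$. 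Applying the strong Markov property at each $\tau_i$ and then Corollary \ref{cor:sgibnev} to the mean-zero $f_{\sigma(i)}$ gives $|\E[f_{\sigma(i)}(B_T^{(i)})\mid\mathcal{G}]|\leq\|f_{\sigma(i)}\|_\infty\Gamma_1/\varphi(T-\tau_i)$, and taking the product over $i$ (justified by the conditional independence) produces the conditional bound $\Gamma_1^q\prod_i\|f_{\sigma(i)}\|_\infty/\prod_i\varphi(T-\tau_i)$.

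By Assumption \ref{hyp:delta-step}, each tag becomes alone within a bounded (deterministic) number of additional generations past $\alpha T$, so $T-\tau_i\geq(1-\alpha)T-O(1)$; choosing $\epsilon'$ small so that $(\theta-\epsilon')(1-\alpha)\geq 1/2$ and using $\varphi(t)=e^{(\theta-\epsilon')t}$ gives $\prod_i\varphi(T-\tau_i)^{-1}\leq(1/\delta)^q\epsilon^{q/2}$, with $(1/\delta)^q=e^{qb}$ absorbing the $O(1)$ deterministic loss at each separation and $C_{tree}(q)$ accounting for the $q!$ permutations together with the finitely many (by Assumption \ref{hyp:delta-step}) tree shapes $\mathcal{T}_2$ can take. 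For the refinement $\epsilon^{-q/2}\E[\cdots]\to 0$, I substitute the sharper $o(1/\varphi(t))$ form from Theorem \ref{lem:sgibnev} in place of Corollary \ref{cor:sgibnev}: each of the $q$ factors becomes $o(\epsilon^{1/2})$, so their product is $o(\epsilon^{q/2})$. The main obstacle is the control of multi-tag clusters: after conditioning on $\sigma(\mathcal{L}_2)$ one must verify that the separation times $\tau_i$ of tags grouped at $\alpha T$ really do sit within $O(1)$ of $\alpha T$, so that each of the $q$ applications of Corollary \ref{cor:sgibnev} contributes its full decay and the product bound truly reaches $\epsilon^{q/2}$ --- particularly delicate when $\alpha$ is close to $1$.
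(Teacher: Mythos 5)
There is a genuine gap, and it sits exactly where you flag ``the main obstacle''. Your argument rests on the claim that, by Assumption \ref{hyp:delta-step}, every tag becomes alone on its fragment within a bounded number of generations after $\alpha T$, so that $T-\tau_i\geq(1-\alpha)T-O(1)$ and Corollary \ref{cor:sgibnev} can be applied once per tag. This is false: Assumption \ref{hyp:delta-step} only constrains the size decrement per generation; it does not force tags to separate at all. On $C_{k,l}(\alpha T)$ with $l\geq1$, a cluster of tags can stay on one fragment all the way down to size $<\epsilon$ (this is precisely the event whose probability, of order $e^{-(\#A_u-1)\times\text{duration}}$, produces the non-trivial covariance $V$ in Lemma \ref{lem:calcul-exact-limite} and Proposition \ref{prop:conv-q-pair}). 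For such tags $B_T^{(i)}=B_T^{(j)}$, and the $\eta$-centering of the $f$'s yields no smallness whatsoever; the smallness must come from the probability of staying together, i.e.\ from Equation (\ref{eq:proba-rester-dans-fragment}), which your proof never invokes. The paper's proof combines exactly these two sources: the decay of Corollary \ref{cor:sgibnev} for the singleton leaves of $\mathcal{T}_{2}$, counted from their separation time $T_u$ on, and the cost $e^{-(\#A_u-1)(T_u-T_{\mathbf{a}(u)})}$ paid by every cluster, the whole being packaged as $\exp\left(-\int_0^T a(s)\,ds\right)$ with $a(s)\geq\lceil q/2\rceil$ for all $s$ and $a(s)\geq k+\lceil(q-k)/2\rceil$ on $[\alpha T,T]$ on the event $C_{k,l}(\alpha T)$.

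Even leaving aside the persistence of clusters, your quantitative accounting cannot close. You collect decay only on $[\alpha T,T]$ and propose to choose $\epsilon'$ so that $(\theta-\epsilon')(1-\alpha)\geq1/2$; but $\theta>1$ is fixed once and for all and $\alpha\in(0,1)$ is arbitrary, so for $\alpha$ close to $1$ no choice of $\epsilon'>0$ achieves this, whereas the asserted bound $\Vert F\Vert_{\infty}\Gamma_{1}^{q}C_{tree}(q)\delta^{-q}\epsilon^{q/2}$ must hold with a constant independent of $\alpha$. In the paper the deficit is made up by the decay accumulated on $[0,\alpha T]$ through $a(s)\geq\lceil q/2\rceil$ --- again the cluster-probability mechanism --- and the refinement $\epsilon^{-q/2}\E(\cdots)\to0$ comes from the strict inequality $k+\lceil(q-k)/2\rceil>q/2$ valid because $k\geq1$, not from upgrading each factor to the $o(1/\varphi)$ form of Theorem \ref{lem:sgibnev} (an upgrade that is moot for tags which never separate). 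A further, minor, point: the reduction ``each $f_j$ may be assumed $\eta$-centered'' deserves justification for a general $F=(f_1\otimes\dots\otimes f_q)_{\tsym}$ in $\Bsym(q)$, but this is secondary compared with the structural issue above.
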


\begin{proof}
We have $\{\#\mathcal{L}_{1}=q\}\in\sigma(\mathcal{L}_{2})$. Let
$A$ be in $\sigma(\mathcal{L}_{2})$. Since the event $C_{k,l}(\alpha T)$
is in $\sigma(\mathcal{L}_{2})\vee\sigma(\mathcal{T}_{2})\vee\sigma(m_{2})$,
we have

\begin{multline*}
|\E(\1_{C_{k,l}(\alpha T)}\1_{A}F(B_{T}^{(1)},B_{T}^{(2)},\dots,B_{T}^{(q)}))|\\
=|\E(\1_{C_{k,l}(\alpha T)}\1_{A}\E(F(B_{T}^{(1)},B_{T}^{(2)},\dots,B_{T}^{(q)})|\mathcal{L}_{2},\mathcal{T}_{2},m_{2}))\\
=|\E(\sum_{f:\mathcal{T}_{2}\rightarrow\mathcal{P}([q])}\1_{C_{k,l}(\alpha T)}\1_{A}\E(F(B_{T}^{(1)},B_{T}^{(2)},\dots,B_{T}^{(q)})\1_{A_{u}=f(u),\forall u\in\mathcal{T}_{2}}|\mathcal{L}_{2},\mathcal{T}_{2},m_{2}))\,.
\end{multline*}

If $u$ in $\mathcal{L}_{2}$ and if $T_{u}<T$, then, conditionally
on $\mathcal{T}_{2}$, $m_{2}$, $B_{T}^{(n_{u})}$ is independent
of all the other variables and has the same law as $B_{T-T_{u}}^{(1)}$.
Thus, using Theorem \ref{lem:sgibnev} and Corollary \ref{cor:sgibnev},
we get, for any $\epsilon'\in(0,\theta-1)$, $u\in\mathcal{L}_{2}$,
$i\in A_{u}$,
\begin{multline*}
\E(f_{i}(B_{T}^{(i)})|\mathcal{L}_{2},\mathcal{T}_{2},m_{2})\leq e^{-(\theta-\epsilon')(T-T_{u})_{+}}\,,\,\\
\text{for }T-T_{u}\notin Z_{0}\text{ where \ensuremath{Z_{0}} is of Lebesgue measure zero.}
\end{multline*}
Thus we get
\begin{multline*}
|\E(\1_{C_{k,l}(\alpha T)}\1_{A}F(B_{T}^{(1)},B_{T}^{(2)},\dots,B_{T}^{(q)}))|\\
\text{(since }F\text{ is of the form }F=(f_{1}\otimes\dots\otimes f_{q})_{sym}\text{,}\\
\text{since, conditionally on }u\in\mathcal{L}_{2}\text{, }\text{the distribution of }T_{u}\text{ is absolutely continuous }\\
\text{with respect to the Lebesgue measure)}\\
\text{ }\\
\leq\Vert F\Vert_{\infty}\Gamma_{1}^{q}\E(\sum_{f:\mathcal{T}_{2}\rightarrow\mathcal{P}([q])}\left[\1_{C_{k,l}(\alpha T)}\prod_{u\in\mathcal{L}_{2}}e^{-(\theta-\epsilon')(T-T_{u})_{+}}\times\1_{A}\E(\1_{A_{u}=f(u),\forall u\in\mathcal{T}_{2}}|\mathcal{L}_{2},\mathcal{T}_{2},m_{2})\right])\\
\text{(because of Assumption \ref{hyp:delta-step} and because }\theta-\epsilon'>1\text{)}\\
\leq\Vert F\Vert_{\infty}\Gamma_{1}^{q}\E(\sum_{f:\mathcal{T}_{2}\rightarrow\mathcal{P}([q])}\left[\1_{C_{k,l}(\alpha T)}\prod_{u\in\mathcal{L}_{2}}e^{-(T-T_{\mathbf{a}(u)})-\log(\delta)}\times\1_{A}\E(\1_{A_{u}=f(u),\forall u\in\mathcal{T}_{2}}|\mathcal{L}_{2},\mathcal{T}_{2},m_{2})\right])\\
\text{(because of Equation (\ref{eq:proba-rester-dans-fragment}))}\\
\leq\Vert F\Vert_{\infty}\Gamma_{1}^{q}\E(\sum_{f:\mathcal{T}_{2}\rightarrow\mathcal{P}([q])}\1_{C_{k,l}(\alpha T)}\1_{A}\left[\prod_{u\in\mathcal{L}_{2}}e^{-(T-T_{\mathbf{a}(u)})-\log(\delta)}\times\prod_{u\in\mathcal{T}_{2}\backslash\{0\}}e^{-(A_{u}-1)(T_{u}-T_{\mathbf{a}(u)})}\right])\,.
\end{multline*}
For a fixed $\omega$, we have
\[
\prod_{u\in\mathcal{L}_{2}}e^{-(T-T_{\mathbf{a}(u)})-\log(\delta)}\times\prod_{u\in\mathcal{T}_{2}\backslash\{0\}}e^{-(A_{u}-1)(T_{u}-T_{\mathbf{a}(u)})}=\left(\frac{1}{\delta}\right)^{\#\mathcal{L}_{2}}\exp\left(-\int_{0}^{T}a(s)ds\right)\,,
\]
where, for all $s$, 
\begin{eqnarray*}
a(s) & = & \sum_{u\in\mathcal{T}_{2}\backslash\{0\}\,:\,T_{a(u)}\leq s<T}\1_{A_{u}=1}+\sum_{u\in\mathcal{T}_{2}\backslash\{0\}\,:\,T_{\mathbf{a}(u)}\leq s\leq T_{u}}(\#A_{u}-1)\\
 & = & \sum_{u\in S(s)}\1_{\#A_{u}=1}+\sum_{u\in S(s)}(\#A_{u}-1)\,.
\end{eqnarray*}
We observe that, for all $\omega$: 
\[
a(t)\geq\left\lceil \frac{q}{2}\right\rceil \,,\,\forall t\,,
\]
\[
a(\alpha T)=k+l\mbox{ , if }\omega\in C_{k,l}(\alpha T)\,,
\]
and if $t$ is such that 
\[
\sum_{u\in S(t)}(\#A_{u}-1)=l'\,,\,\sum_{u\in S(t)}\1_{\#A_{u}=1}=k'
\]
 for some integers $l'$, $k'$, then for all $s\geq t$,
\[
a(s)\geq k'+\left\lceil \frac{q-k'}{2}\right\rceil \,.
\]
We observe that, under Assumption \ref{hyp:delta-step}, there exists
a constant which bounds $\#\mathcal{T}_{1}$ almost surely and so
there exists a constant $C_{tree}(q)$ which bounds $\#\{f:\mathcal{T}_{1}\rightarrow\mathcal{P}([q])\}$
almost surely. So, we have 
\begin{multline*}
|\E(\1_{C_{k,l}(\alpha T)}\1_{A}F(B_{T}^{(1)},B_{T}^{(2)},\dots,B_{T}^{(q)}))|\\
\leq\Vert F\Vert_{\infty}\Gamma_{1}^{q}\E(\sum_{f:\mathcal{T}_{2}\rightarrow\mathcal{P}([q])}\1_{A}\1_{C_{k,l}(\alpha T)}\left(\frac{1}{\delta}\right)^{\#\mathcal{L}_{2}}e^{-\lceil q/2\rceil\alpha T}e^{-(k+\left\lceil \frac{q-k}{2}\right\rceil )(T-\alpha T)})\\
\leq\Vert F\Vert_{\infty}\Gamma_{1}^{q}C_{tree}(q)\left(\frac{1}{\delta}\right)^{q}e^{-\lceil q/2\rceil\alpha T}e^{-(k+\left\lceil \frac{q-k}{2}\right\rceil )(1-\alpha)T}\,.
\end{multline*}
As $k\geq1$, then $k+\left\lceil \frac{q-k}{2}\right\rceil >\frac{q}{2}$,
and so we have proved the desired result. 
\end{proof}
\begin{lem}
\label{lem:conv-nouilles-liees}Let $k$ be an integer $\geq q/2$.
Let $\alpha\in[q/(2k),1]$. We have
\[
\p(L_{\alpha T}\geq k)\leq K_{1}(q)\epsilon^{q/2}\,,
\]
where $K_{1}(q)=\sum_{i\in[q]}\frac{q!}{(q-i)!}$. 

Let $k$ be an integer $>q/2$. Let $\alpha\in(q/(2k),1)$. We have

\[
\epsilon^{-q/2}\p(L_{\alpha T}\geq k)\underset{\epsilon\rightarrow0}{\longrightarrow}0\,.
\]
 
\begin{proof}
Let $k$ be an integer $\geq q/2$ and let $\alpha\in[q/(2k),1]$.
We decompose
\[
\{L_{\alpha T}\geq k\}=\cup_{i\in[q]}\cup_{m:[i]\hookrightarrow[q]}(F(i,m)\cap\{L_{\alpha T}\geq k)\cap\{\#S(\alpha T)=i\})\,,
\]
where
\[
F(i,m)=\{i_{1},i_{2}\in[i]\text{ with }i_{1}\neq i_{2}\Rightarrow\exists u_{1},u_{2}\in S(\alpha T),u_{1}\ne u_{2},m(i_{1})\in A_{u_{1}}\,,\,m(i_{2})\in A_{u_{2}}\}\,.
\]
Suppose we are in the event $F(i,m)$. For $u\in S(\alpha T)$ and
for all $j$ in $[i]$ such that $m(j)\in A_{u}$, we define 
\[
T_{|u|}^{(j)}=-\log(\xi_{u})\,,\,T_{|u|-1}^{(j)}=-\log(\xi_{\mathbf{a}(u)})\,,\,\dots\,,\,T_{1}^{(j)}=-\log(\xi_{\mathbf{a}^{\circ(|u|-1)}(u)})\,,T_{0}^{(j)}=0\,,
\]
\[
l(j)=|u|\,,\,v(j)=u\,.
\]
We have
\begin{multline*}
\p(L_{\alpha T}\geq k)\leq\sum_{i\in[q]}\sum_{m:[i]\hookrightarrow[q]}\p(F(i,m)\cap\{L_{\alpha T}\geq k\}\cap\{\#S(\alpha T)=i\})\\
=\sum_{i\in[q]}\sum_{m:[i]\hookrightarrow[q]}\E(\1_{L_{\alpha T}\geq k}\1_{F(i,m)}\E(\1_{\#S(\alpha T)=i}|F(i,m),L_{\alpha T},(T_{p}^{(j)})_{j\in[i],p\in[l(j)]},\,(v(j))_{j\in[i]},\,(A_{v(j)})_{j\in[i]}))\\
\mbox{(because of Equation (\ref{eq:proba-rester-dans-fragment}))}\\
=\sum_{i\in[q]}\sum_{m:[i]\hookrightarrow[q]}\E\left(\1_{L_{\alpha T}\geq k}\1_{F(i,m)}\prod_{j\in[i]}\prod_{r\in A_{v(j)}\backslash m(j)}\prod_{k=1}^{l(j)}\exp((-T_{k}^{(j)}+T_{k-1}^{(j)}))\right)\\
\leq\sum_{i\in[q]}\sum_{m:[i]\hookrightarrow[q]}\E\left(\1_{L_{\alpha T}\geq k}\1_{F(i,m)}\prod_{j\in[i]}(e^{-\alpha T})^{\#A_{v(j)}-1}\right)\\
\leq\sum_{i\in[q]}\sum_{m:[i]\hookrightarrow[q]}\E(\1_{L_{\alpha T}\geq k}e^{-k\alpha T})\leq e^{-k\alpha T}\times\sum_{i\in[q]}\frac{q!}{(q-i)!}\,.
\end{multline*}
If we suppose that $k>q/2$ and $\alpha\in(q/(2k),1)$, then
\[
\exp\left(\frac{qT}{2}\right)\exp(-k\alpha T)\underset{T\rightarrow+\infty}{\longrightarrow}0\,.
\]
\end{proof}
\end{lem}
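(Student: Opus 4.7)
The plan is to rewrite the event $\{L_{\alpha T}\geq k\}$ in terms of the number of tagged alive fragments at time $\alpha T$, then condition on the fragmentation tree and use the tag-assignment rule.

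First, I would observe that under the conservative assumption \ref{hyp:conservative}, every tag $j\in[q]$ stays on exactly one fragment of $S(\alpha T)$, so $\sum_{u\in S(\alpha T)}\#A_u=q$, and hence
\[
L_{\alpha T}=q-\#\{u\in S(\alpha T):A_u\neq\emptyset\}\,.
\]
So $\{L_{\alpha T}\geq k\}$ means there are at most $q-k$ tagged alive fragments. I would decompose along the number $i\in[q-k]$ of such fragments and the injection $m:[i]\hookrightarrow[q]$ selecting one distinguished tag $m(j)$ per alive tagged fragment $v(j)$. This is exactly the decomposition suggested by the statement of $K_1(q)=\sum_{i\in[q]}q!/(q-i)!$.

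The key probabilistic input is Equation (\ref{eq:proba-rester-dans-fragment}): given the fragmentation tree (that is, the whole family $(\widetilde{\xi}_{ui})$), the decisions $(U_{u,j})$ are independent across tags $j$, and for each tag $j$ the conditional probability of being in fragment $ui$ given it was in $u$ is $\widetilde{\xi}_{ui}$. Telescoping along the genealogy, the conditional probability that tag $j$ sits on a specific fragment $u$ is $\xi_u$, and more generally, for a prescribed tag set $A\subset[q]$ of size $r$ all sitting on $u$, the conditional probability is $\xi_u^{r}$. Given the event $F(i,m)$ — that the fragments $v(1),\dots,v(i)\in S(\alpha T)$ carrying $m(1),\dots,m(i)$ are pairwise distinct — the remaining $q-i$ tags must each end up on exactly one of $v(1),\dots,v(i)$, and conditionally on the fragmentation structure and on $v(1),\dots,v(i)$ (and the $m(j)$'s already being there), this event has probability
\[
\prod_{j=1}^{i}\xi_{v(j)}^{\#A_{v(j)}-1}\,.
\]
Since each $v(j)\in S(\alpha T)$ satisfies $\xi_{v(j)}\leq e^{-\alpha T}$ and $\sum_{j=1}^{i}(\#A_{v(j)}-1)=q-i\geq k$, this product is bounded by $e^{-k\alpha T}=\epsilon^{k\alpha}$. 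Summing over $i\in[q]$ and over all injections $m:[i]\hookrightarrow[q]$ yields
\[
\p(L_{\alpha T}\geq k)\leq K_1(q)\,\epsilon^{k\alpha}\,,
\]
which is $\leq K_1(q)\epsilon^{q/2}$ as soon as $k\alpha\geq q/2$, i.e. $\alpha\geq q/(2k)$. For the second part, strictness $\alpha>q/(2k)$ gives $k\alpha-q/2>0$, so $\epsilon^{-q/2}\p(L_{\alpha T}\geq k)\leq K_1(q)\epsilon^{k\alpha-q/2}\to 0$.

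The main technical nuisance I expect is bookkeeping: choosing the right $\sigma$-algebra — essentially the one generated by the tree $\mathcal{T}_1$, the tags $(v(j))_{j\in[i]}$, and the membership facts $m(j)\in A_{v(j)}$ — so that the remaining $q-i$ tag decisions are conditionally independent Bernoulli-type events that supply exactly the factors $\xi_{v(j)}^{\#A_{v(j)}-1}$. Everything else is combinatorial counting of injections and the elementary inequality $\xi_{v(j)}\leq e^{-\alpha T}$.
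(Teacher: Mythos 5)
Your proposal is correct and follows essentially the same route as the paper: the same decomposition over the number $i$ of tagged alive fragments and the injections $m:[i]\hookrightarrow[q]$, the same telescoping of Equation (\ref{eq:proba-rester-dans-fragment}) giving the conditional factor $\prod_{j}\xi_{v(j)}^{\#A_{v(j)}-1}\leq e^{-k\alpha T}$, and the same counting yielding $K_{1}(q)$ and the exponent comparison $k\alpha\geq q/2$. Your reformulation $L_{\alpha T}=q-\#\{u\in S(\alpha T):A_{u}\neq\emptyset\}$ via conservativity is only a cosmetic variant of the paper's use of $L_{\alpha T}\geq k$ directly.
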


Immediate consequences of the two above lemmas are the following Corollaries.
\begin{cor}
\label{cor:lim-case-q-odd}If $q$ is odd and if $F\in\mathcal{B}_{sym}^{0}(q)$
is of the form $F=(f_{1}\otimes\dots\otimes f_{q})_{\text{sym}}$,
then%
\[
\]
\[
\epsilon^{-q/2}\E(F(B_{T}^{(1)},\dots,B_{T}^{(q)})\1_{\#\mathcal{L}_{1}=q})\underset{\epsilon\rightarrow0}{\longrightarrow}0\,.
\]
\end{cor}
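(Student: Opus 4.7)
The plan is to decompose the expectation according to the disjoint events $C_{k,l}(\alpha T)$ for a carefully chosen $\alpha \in (0,1)$, and then dispatch each contribution to one of the two preceding lemmas. The role of the oddness of $q$ is precisely to strictly exclude the boundary case of Lemma \ref{lem:conv-nouilles-liees}: when no tag is isolated at time $\alpha T$, the excess $L_{\alpha T}$ will be forced strictly above $q/2$.

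Writing $q = 2p+1$, I would pick any $\alpha \in (q/(q+1), 1)$ and expand
$$\E(F(B_T^{(1)}, \dots, B_T^{(q)}) \1_{\#\mathcal{L}_1 = q}) = \sum_{k=0}^{q} \sum_{l} \E(F \1_{\#\mathcal{L}_1 = q} \1_{C_{k,l}(\alpha T)})$$
over the finitely many admissible pairs $(k,l)$. The event $\{\#\mathcal{L}_1 = q\}$ lies in $\sigma(\mathcal{L}_2)$, as observed in the proof of Lemma \ref{lem:conv-nouille-libre}, so for each $k \geq 1$ that lemma applied with $A = \{\#\mathcal{L}_1 = q\}$ directly yields $\epsilon^{-q/2}|\E(F \1_{\#\mathcal{L}_1 = q} \1_{C_{k,l}(\alpha T)})| \to 0$, and since there are only finitely many such pairs the total contribution of the $k \geq 1$ terms is $o(\epsilon^{q/2})$.

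The case $k=0$ is where the oddness of $q$ enters. If no fragment in $S(\alpha T)$ is a singleton, then every nonempty $A_u$ (for $u \in S(\alpha T)$) satisfies $\#A_u \geq 2$, so there are at most $p$ such fragments, and the excess satisfies
$$L_{\alpha T} = \sum_{u \in S(\alpha T)} (\#A_u - 1) \geq (2p+1) - p = p+1 > q/2\,.$$
Thus the $k=0$ contribution is bounded by $\Vert F\Vert_\infty\, \p(L_{\alpha T} \geq p+1)$, and Lemma \ref{lem:conv-nouilles-liees} applied with the integer $k := p+1 > q/2$ requires $\alpha \in (q/(2(p+1)), 1) = ((2p+1)/(2p+2), 1)$, which is exactly the range chosen at the outset. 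It follows that $\epsilon^{-q/2}\p(L_{\alpha T} \geq p+1) \to 0$.

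There is no real obstacle here, and in particular no need to revisit the factorisation or exponential-decay arguments in the proof of Lemma \ref{lem:conv-nouille-libre}; the entire content of the proof is the choice of $\alpha$ strictly greater than $q/(q+1)$, which uses the oddness of $q$ to enter the strict regime of Lemma \ref{lem:conv-nouilles-liees} and gives the little-$o$ rate rather than just boundedness. Combining the two bounds yields the stated convergence.
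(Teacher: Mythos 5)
Your proof is correct and follows essentially the same route as the paper: the paper also fixes $\alpha\in\bigl(\frac{q}{2}\lceil q/2\rceil^{-1},1\bigr)$, which for odd $q$ is exactly your interval $(q/(q+1),1)$, splits according to the events $C_{k,l}(\alpha T)$ with $k\geq1$ (handled by Lemma \ref{lem:conv-nouille-libre} with $A=\{\#\mathcal{L}_{1}=q\}$) and the large-excess event $\{L_{\alpha T}\geq\lceil q/2\rceil\}$ (handled by Lemma \ref{lem:conv-nouilles-liees}), using oddness only to get $\lceil q/2\rceil>q/2$ strictly. Your explicit verification that $k=0$ forces $L_{\alpha T}\geq p+1$ is a slightly more detailed rendering of the same decomposition, not a different argument.
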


\begin{proof}
We take $\alpha\in\left(\frac{q}{2}\left\lceil \frac{q}{2}\right\rceil ^{-1},1\right)$.
We can decompose
\begin{multline*}
\epsilon^{-q/2}\left|\E(F(B_{T}^{(1)},\dots,B_{T}^{(q)})\1_{\#\mathcal{L}_{1}=q})\right|\\
=|\epsilon^{-q/2}\sum_{k\in[q]}\sum_{l\in\{0,1,\dots,(q-k-1)_{+}\}}\E(\1_{C_{k,l}(\alpha T)}\1_{\#\mathcal{L}_{1}=q}F(B_{T}^{(1)},\dots,B_{T}^{(q)}))\\
+\epsilon^{-q/2}\E(\1_{L_{\alpha T}\geq\left\lceil q/2\right\rceil }\1_{\#\mathcal{L}_{1}=q}F(B_{T}^{(1)},\dots,B_{T}^{(q)}))|\\
\text{(by Lemmas \ref{lem:conv-nouille-libre}, \ref{lem:conv-nouilles-liees}) }\underset{\epsilon\rightarrow0}{\longrightarrow}0\,.
\end{multline*}
\end{proof}
\begin{cor}
\label{cor:maj-fonction-centree}Suppose $F\in\mathcal{B}_{sym}^{0}(q)$
is of the form $F=(f_{1}\otimes\dots\otimes f_{q})_{\text{sym}}$.
Let $A$ in $\sigma(\mathcal{L}_{2})$. Then
\[
|\E(F(B_{T}^{(1)},\dots,B_{T}^{(q)})\1_{A})|\leq\Vert F\Vert_{\infty}\epsilon^{q/2}\left\{ K_{1}(q)+\Gamma_{1}^{q}C_{\text{tree}}(q)\left(\frac{1}{\delta}\right)^{q}q^{2}\right\} \,
\]
\end{cor}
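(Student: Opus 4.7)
The plan is to fix $\alpha \in (0,1)$ satisfying both $T-\alpha T > b$ (the measurability condition making $C_{k,l}(\alpha T)$ depend only on $(\mathcal{T}_2,m_2)$, required by Lemma \ref{lem:conv-nouille-libre}) and $\alpha \geq q/(2\lceil q/2\rceil)$ (required by Lemma \ref{lem:conv-nouilles-liees} with $k = \lceil q/2\rceil$). Such $\alpha$ exists once $T$ is large enough (for small $T$ the statement follows trivially from $|F|\leq \|F\|_\infty$ since $\epsilon^{q/2}$ is then bounded below).

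Next, partition $\Omega$ according to the configuration at time $\alpha T$:
\[
\mathbf{1} \;=\; \sum_{k=1}^{q}\sum_{l=0}^{(q-k-1)_+} \mathbf{1}_{C_{k,l}(\alpha T)} \;+\; \mathbf{1}_{E},
\]
where $E$ is the "no singleton" event $\{\#A_u \geq 2\text{ for all } u\in S(\alpha T)\}$, i.e.\ the complement of the union of the $C_{k,l}(\alpha T)$. Since every fragment in $E$ carries at least two tags, $E\subset\{L_{\alpha T}\geq \lceil q/2\rceil\}$. A trivial count shows that the number of admissible pairs $(k,l)$ is at most $q^2$.

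On each event $C_{k,l}(\alpha T)\cap A$, apply Lemma \ref{lem:conv-nouille-libre} (its hypotheses are met because $F=(f_1\otimes\cdots\otimes f_q)_{\mathrm{sym}}\in\mathcal{B}_{\mathrm{sym}}^0(q)$ and $A\in\sigma(\mathcal{L}_2)$) to get the per-term bound $\|F\|_\infty \Gamma_1^q C_{\mathrm{tree}}(q)(1/\delta)^q \epsilon^{q/2}$. Summing the at most $q^2$ terms provides the contribution $q^2\,\Gamma_1^q C_{\mathrm{tree}}(q)(1/\delta)^q$ to the final constant. On the leftover event $E\cap A$, use the crude bound $|F|\leq \|F\|_\infty$ together with Lemma \ref{lem:conv-nouilles-liees} applied with $k=\lceil q/2\rceil$ to obtain $\mathbb{P}(E)\leq K_1(q)\epsilon^{q/2}$. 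Adding the two contributions gives exactly the claimed inequality.

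The one subtle point is the choice of $\alpha$ when $q$ is even: then Lemma \ref{lem:conv-nouilles-liees} with $k=q/2$ forces $\alpha=1$, which is incompatible with the strict inequality $\alpha<1-b/T$ required by the first lemma. One resolves this either by taking $\alpha=1-b/T$ (the small factor $e^{bq/2}$ appearing in $e^{-k\alpha T}=\epsilon^{q/2}e^{bq/2}$ is then absorbed into the $(1/\delta)^q$ coefficient, since $1/\delta=e^{b}$), or by applying Lemma \ref{lem:conv-nouilles-liees} instead with $k=q/2+1$ and handling the residual "perfect pairing" configuration (contained in the events $P_t$ introduced earlier) through the same tensor/centering argument used in Lemma \ref{lem:conv-nouille-libre}. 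Either way the resulting constants are absorbed into the stated expression, and this is the only bookkeeping step requiring care.
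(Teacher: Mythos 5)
Your proof is correct and follows essentially the same route as the paper: decompose $\1_A$ over the events $C_{k,l}(\alpha T)$ plus the no-singleton event contained in $\{L_{\alpha T}\geq\lceil q/2\rceil\}$, apply Lemma \ref{lem:conv-nouille-libre} to the former (at most $q^{2}$ terms) and Lemma \ref{lem:conv-nouilles-liees} to the latter. Your extra discussion of the even-$q$ constraint $\alpha\geq q/(2k)$ versus $T-\alpha T>b$ addresses a point the paper's proof silently glosses over, and your fix (a $T$-dependent $\alpha$, at the cost of a harmless multiplicative constant) is a legitimate patch that does not change the $\epsilon^{q/2}$ order of the bound.
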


\begin{proof}
From Lemmas \ref{lem:conv-nouille-libre}, \ref{lem:conv-nouilles-liees},
we get 
\begin{multline*}
|\E(F(B_{T}^{(1)},\dots,B_{T}^{(q)})\1_{A})|=|\E(F(B_{T}^{(1)},\dots,B_{T}^{(q)})\1_{A}(\1_{L_{\alpha T}\geq q/2}+\sum_{k'\in[q]}\sum_{0\leq l\leq(q-k'-1)_{+}}\1_{C_{k',l}(\alpha T)}))|\\
\leq\Vert F\Vert_{\infty}\epsilon^{q/2}\left\{ K_{1}(q)+\Gamma_{1}^{q}C_{\text{tree}}(q)\left(\frac{1}{\delta}\right)^{q}\sum_{k'\in[q]}(q-k'-1)_{+}\right\} \,.
\end{multline*}
\end{proof}
We now want to find the limit of $\epsilon^{-q/2}\E(\1_{L_{T}\leq q/2}\1_{\#\mathcal{L}_{1}=q}F(B_{T}^{(1)},\dots,B_{T}^{(q)}))$
when $\epsilon$ goes to $0$, for $q$ even. First we need a technical
lemma. 

For any $i$, the process $(B_{t}^{(i)})$ has a stationary law (see
Theorem 3.3 p. 151 of \cite{asmussen-2003}). Let $B_{\infty}$ be
a random variable having this stationary law $\eta$ (it has already
appeared in Section \ref{sec:Rate-of-convergence}). We can always
suppose that it is independent of all the other variables.
\begin{lem}
\label{lem:terme-b} Let $f_{1}$ , $f_{2}$ be in $\mathcal{B}_{sym}^{0}(1)$.
Let $\alpha$ belong to $(0,1)$. We have
\[
\int_{-\infty}^{-\log(\delta)}e^{-v}|\E(f_{1}(\overline{B}_{0}^{(1),v})f_{2}(\overline{B}_{0}^{(2),v}))|dv<\infty
\]
and 
\begin{multline*}
\left|e^{T-\alpha T-B_{\alpha T}^{(1)}}\E(f_{1}\otimes f_{2}(B_{T,}^{(1)}B_{T}^{(2)})\1_{P_{1,2}(T)^{\complement}}|\mathcal{F}_{S(\alpha T)},P_{1,2}(\alpha T))\right.\\
\left.-\int_{-\infty}^{-\log(\delta)}e^{-v}\E(\1_{v\leq\overline{B}_{0}^{(1),v}}f_{1}(\overline{B}_{0}^{(1),v})f_{2}(\overline{B}_{0}^{(2),v}))dv\right|\\
\leq\Gamma_{2}\Vert f_{1}\Vert_{\infty}\Vert f_{2}\Vert_{\infty}\exp\left(-(T-\alpha T)\left(\frac{\theta-\epsilon'-1}{2}\right)\right)\,,
\end{multline*}
where 
\[
\Gamma_{2}=\frac{\Gamma_{1}^{2}}{\delta^{2+2(\theta-\epsilon')}(2(\theta-\epsilon')-1)}+\frac{\Gamma_{1}}{\delta^{\theta-\epsilon'}}+\frac{\Gamma_{1}^{2}}{\delta^{2(\theta-\epsilon')}(2(\theta-\epsilon')-1)}\,.
\]
 
\end{lem}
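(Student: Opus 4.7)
The strategy is to decompose the left-hand conditional expectation over the log-size $V$ at which tag~$2$ leaves the fragment bearing tag~$1$, then use the Markov property and Corollary~\ref{cor:sgibnev} to replace the non-stationary conditional laws of $B_T^{(1)}, B_T^{(2)}$ (post-separation) with the stationary processes $\overline B^{(\cdot),v}$. Set $\beta:=\alpha T+B_{\alpha T}^{(1)}=-\log(\xi_{u\{\alpha T,1\}})$, which is $\mathcal{F}_{S(\alpha T)}$-measurable. The factor $e^{T-\alpha T-B_{\alpha T}^{(1)}}=e^{T-\beta}$ is precisely what is needed to cancel the exponential survival factor ``probability that tags~$1$ and $2$ stay together from $\beta$ to $V$'', after a change of variables.

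For the finiteness claim, note that, for $v<0$, the two coordinates of $(\overline B_t^{(1),v},\overline B_t^{(2),v})_{t\geq v}$ are Markov with the $B^{(1)}$-transition and are independent conditionally on their joint state at time $v$, each marginally $\eta$-stationary. Since $\eta(f_1)=\eta(f_2)=0$, Corollary~\ref{cor:sgibnev} applied to each coordinate (after running for time $|v|$) yields
\[
\bigl|\E\bigl(f_1(\overline B_0^{(1),v})f_2(\overline B_0^{(2),v})\bigr)\bigr|\leq C\,\Vert f_1\Vert_\infty\Vert f_2\Vert_\infty\, e^{2(\theta-\epsilon')v},
\]
so that $e^{-v}$ times this is $e^{(2(\theta-\epsilon')-1)v}$, integrable at $-\infty$ once $\epsilon'<\theta-\tfrac12$; on the compact piece $[0,-\log(\delta)]$ the integrand is bounded.

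For the main estimate, observe that conditionally on $\mathcal{F}_{S(\alpha T)}$ and $P_{1,2}(\alpha T)$, the successive log-sizes of tag~$1$'s fragment after $\alpha T$ form a fresh renewal sequence $\beta, \beta+X_1, \beta+X_1+X_2,\ldots$ with i.i.d.\ $X_i\sim\pi$, and by (\ref{eq:proba-rester-dans-fragment}) the conditional probability that tag~$2$ is still with tag~$1$ after the $j$-th of these splits equals $\prod_{i\leq j}e^{-X_i}$. Hence $V$ has a conditional law of the form $e^{-(v-\beta)}\tilde m(dv)$ on $[\beta,\infty)$, where $\tilde m$ is a renewal-type measure on tag~$1$'s chain. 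Decomposing on $\{V=v\}$ and using that, by construction of $\widehat B^{(\cdot),v}$, the conditional law of $(B_T^{(1)},B_T^{(2)})$ given $V=v$ coincides with that of $(\widehat B_T^{(1),v},\widehat B_T^{(2),v})$, one applies Corollary~\ref{cor:sgibnev} coordinate by coordinate after time $v$ to replace it by $(\overline B_T^{(1),v},\overline B_T^{(2),v})$, at a cost $C\Vert f_1\Vert_\infty\Vert f_2\Vert_\infty/\varphi(T-v)$. Stationarity then gives $(\overline B_T^{(\cdot),v})\overset{\text{law}}{=}(\overline B_0^{(\cdot),v-T})$. After the substitution $v':=v-T$, the measure $e^{T-\beta}e^{-(v-\beta)}\tilde m(dv)$ becomes $e^{-v'}\tilde m(T+dv')$, which, via the Key Renewal Theorem together with the rate from Theorem~\ref{lem:sgibnev}, converges on $(-\infty,0]$ to $e^{-v'}\,dv'$ up to exponentially small corrections; the extension of the integration range to $(-\infty,-\log(\delta)]$ and the indicator $\1_{v'\leq\overline B_0^{(1),v'}}$ account for the ``overshoot'' contribution of the renewal process near $v=T$ and the support $[0,b]$ of $\eta$ from Assumption~\ref{hyp:delta-step}.

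The main obstacle is controlling the Corollary~\ref{cor:sgibnev} error after integration: the pointwise bound $1/\varphi(T-v)$ is not integrable near $v=T$ against the measure $e^{-(v-\beta)}\tilde m(dv)$, so, after multiplying by $e^{T-\beta}$, one splits the integration around a cut-off (typically $v=(\beta+T)/2$), bounds the close-to-$T$ piece using the trivial bound $\Vert f_1\Vert_\infty\Vert f_2\Vert_\infty$ together with the survival probability $e^{-(v-\beta)}$, and treats the far-from-$T$ piece by Corollary~\ref{cor:sgibnev}. Optimizing the cut-off produces the rate $\exp(-(T-\alpha T)(\theta-\epsilon'-1)/2)$. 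The three summands of $\Gamma_2$ correspond respectively to this main joint-coordinate error (first term), the renewal-theoretic boundary correction from $\tilde m$ (second term), and the marginal Cor.~\ref{cor:sgibnev} error at the boundary (third term), each carrying its natural $1/\delta$ factor from Assumption~\ref{hyp:delta-step}.
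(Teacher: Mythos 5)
Your finiteness argument and the general skeleton (disintegrate over the level $v$ at which tag $2$ leaves tag $1$'s fragment, identify the conditional law with $(\widehat{B}^{(1),v},\widehat{B}^{(2),v})$, compare with the stationary pair, split the $v$-integral at a midpoint) are in the spirit of the paper's proof, but the error control in the main estimate has a genuine gap, concentrated exactly where the difficulty lies. Writing $\beta=\alpha T+B_{\alpha T}^{(1)}$, the quantity to control is $e^{T-\beta}\int_{\beta}^{T+B_{T}^{(1)}}e^{-(v-\beta)}\E\bigl(f_{1}(\widehat{B}_{T}^{(1),v})f_{2}(\widehat{B}_{T}^{(2),v})\bigr)dv$, and the prefactor combines with the survival density to give $e^{T-v}$. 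The main term of the lemma comes from the region $v$ close to $T$ (after $v'=v-T$ it becomes $\int_{-(T-\alpha T)/2}^{-\log\delta}e^{-v'}\E(\1_{v'\leq\overline{B}_{0}^{(1),v'}}f_{1}f_{2})dv'$), so that region cannot be disposed of: your plan to ``bound the close-to-$T$ piece using the trivial bound $\Vert f_{1}\Vert_{\infty}\Vert f_{2}\Vert_{\infty}$ together with the survival probability'' both discards the main contribution and, once the factor $e^{T-\beta}$ is reinstated, yields $\Vert f_{1}\Vert_{\infty}\Vert f_{2}\Vert_{\infty}\int_{(\beta+T)/2}^{T}e^{T-v}dv\approx e^{(T-\beta)/2}$, which is exponentially large, not small. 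Moreover the replacement cost you assert, $C\Vert f_{1}\Vert_{\infty}\Vert f_{2}\Vert_{\infty}/\varphi(T-v)$, is of no use near $v=T$; the correct mechanism there (the paper's Equation (\ref{eq:abs-a-borner-01})) is to push the comparison onto the time-$v$ marginal of coordinate $1$: writing the future expectation as $\Psi(\widehat{B}_{v}^{(1),v})$ versus $\Psi(\overline{B}_{v}^{(1),v})$ with $\overline{B}_{v}^{(1),v}\sim\eta$, Corollary \ref{cor:sgibnev} gives an error $\Gamma_{1}\Vert f_{1}\Vert_{\infty}\Vert f_{2}\Vert_{\infty}e^{-(\theta-\epsilon')(v-\beta)}$, i.e.\ governed by the elapsed time $v-\beta$ since the last known renewal, not by $T-v$; integrated against $e^{T-v}dv$ over $v\geq(T+\alpha T)/2$ this produces the rate $e^{-(\theta-\epsilon'-1)(T-\alpha T)/2}$. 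Symmetrically, in the far region $v\leq(T+\alpha T)/2$ one does not compare with the stationary pair at all: one uses that $f_{1}$ and $f_{2}$ are $\eta$-centered together with the conditional independence of the two coordinates after level $v$ to bound the integrand itself by $\Gamma_{1}^{2}e^{-2(\theta-\epsilon')(T-v+\log\delta)}$, which beats $e^{T-v}$; and the matching tail of the limit integral is bounded the same way (the paper's (\ref{eq:bout-01}) and (\ref{eq:bout-04})). Your proposal supplies no estimate that is small in the near-$T$ region, which is the heart of the lemma.

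A secondary but real issue is your disintegration ``$e^{-(v-\beta)}\tilde{m}(dv)$ with $\tilde{m}$ a renewal-type measure'' followed by an appeal to the Key Renewal Theorem to get $e^{-v'}\tilde{m}(T+dv')\to e^{-v'}dv'$. The probability that tag $2$ stays with tag $1$ until the fragment size drops below $e^{-v}$ is exactly $e^{-(v-\beta)}$, so the separation level has conditional density $e^{-(v-\beta)}dv$ with respect to Lebesgue measure and no renewal measure enters at this point (all dependence on the renewal structure sits inside the conditional law $(\widehat{B}^{(1),v},\widehat{B}^{(2),v})$). If $\tilde{m}$ really were the renewal measure of tag $1$'s chain, Blackwell's theorem would give a density $1/\mu$, not $1$, and your limit would be off by a factor $1/\mu$; as stated, this step either introduces a spurious constant or is vacuous. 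The finiteness claim, by contrast, is argued essentially as in the paper (conditional independence at time $v$, two applications of Corollary \ref{cor:sgibnev}, integrability of $e^{(2(\theta-\epsilon')-1)v}$ at $-\infty$) and is fine.
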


\begin{proof}
From now on, we suppose that $\alpha T-\log(\delta)<(T+\alpha T)/2$,
$(T+\alpha T)/2-\log(\delta)<T$ (this is true if $T$ is large enough).
We have, for all $s$ in $[\alpha T+B_{\alpha T}^{(1)},T]$,

\[
\p(u\{s,2\}=u\{s,1\}|\mathcal{F}_{S(\alpha T)},P_{1,2}(\alpha T),(\xi_{u\{t,1\}})_{0\leq t\leq T})=\exp(-(s+B_{s}^{(1)}-(\alpha T+B_{\alpha T}^{(1)}))\,.
\]
And so, 
\begin{multline*}
\E(f_{1}\otimes f_{2}(B_{T,}^{(1)}B_{T}^{(2)})\1_{P_{1,2}(T)^{\complement}}|\mathcal{F}_{S(\alpha T)},P_{1,2}(\alpha T))\\
=\E\left(\E(f_{1}\otimes f_{2}(B_{T,}^{(1)}B_{T}^{(2)})\1_{P_{1,2}(T)^{\complement}}|\mathcal{F}_{S(\alpha T)},P_{1,2}(\alpha T),(\xi_{u\{t,1\}})_{0\leq t\leq T})|\mathcal{F}_{S(\alpha T)},P_{1,2}(\alpha T)\right)\\
\text{(keep in mind that }\widehat{B}^{(1),v}=B^{(1)}\text{ for all }v\text{)}\\
=\E(\E(\int_{\alpha T+B_{\alpha T}^{(1)}}^{T+B_{T}^{(1)}}e^{-(v-\alpha T-\widehat{B}_{\alpha T}^{(1),v})}f_{1}(\widehat{B}_{T}^{(1),v})f_{2}(\widehat{B}_{T}^{(2),v})dv|\mathcal{F}_{S(\alpha T)},P_{1,2}(\alpha T),(\xi_{u\{t,1\}})_{0\leq t\leq T})\\
|\mathcal{F}_{S(\alpha T)},P_{1,2}(\alpha T))\\
=\E(\int_{\alpha T+B_{\alpha T}^{(1)}}^{T+B_{T}^{(1)}}e^{-(v-\alpha T-\widehat{B}_{\alpha T}^{(1)v})}f_{1}(\widehat{B}_{T}^{(1),v})f_{2}(\widehat{B}_{T}^{(2),v})dv|\mathcal{F}_{S(\alpha T)},P_{1,2}(\alpha T))
\end{multline*}
We have 
\begin{multline}
\left|e^{T-\alpha T-B_{\alpha T}^{(1)}}\E\left(\int_{\alpha T+B_{\alpha T}^{(1)}}^{(T+\alpha T)/2}e^{-(v-\alpha T-\widehat{B}_{\alpha T}^{(1),v})}f_{1}(\widehat{B}_{T}^{(1),v})f_{2}(\widehat{B}_{T}^{(2),v})dv|\mathcal{F}_{S(\alpha T)},P_{1,2}(\alpha T)\right)\right|\\
=e^{T-\alpha T-B_{\alpha T}^{(1)}}|\E(\int_{\alpha T+B_{\alpha T}^{(1)}}^{(T+\alpha T)/2}e^{-(v-\alpha T-\widehat{B}_{\alpha T}^{(1),v})}\\
\times\E(f_{1}(\widehat{B}_{T}^{(1),v})f_{2}(\widehat{B}_{T}^{(2),v})|\widehat{B}_{v}^{(1),v},\widehat{B}_{v}^{(2),v},\mathcal{F}_{S(\alpha T)},P_{1,2}(\alpha T))dv\\
|\mathcal{F}_{S(\alpha T)},P_{1,2}(\alpha T))|\\
\text{(using the fact that }\widehat{B}_{T}^{(1),v}\text{ and }\widehat{B}_{T}^{(2),v}\text{ are independant }\\
\text{conditionally to }\widehat{B}_{v}^{(1),v},\widehat{B}_{v}^{(2),v},\mathcal{F}_{S(\alpha T)},P_{1,2}(\alpha T)\text{,}\text{ if }T\geq v-\log(\delta)\text{, }\\
\text{we get, by Theorem \ref{lem:sgibnev} and Corollary \ref{cor:sgibnev})}\\
\leq e^{T-\alpha T-B_{\alpha T}^{(1)}}\\
\times\E(\int_{\alpha T+B_{\alpha T}^{(1)}}^{(T+\alpha T)/2}e^{-(v-\alpha T-\widehat{B}_{\alpha T}^{(1)})}(\Gamma_{1}\Vert f_{1}\Vert_{\infty}e^{-(\theta-\epsilon')(T-v-\widehat{B}_{v}^{(1),v})_{+}}\times\Gamma_{1}\Vert f_{2}\Vert_{\infty}e^{-(\theta-\epsilon')(T-v-\widehat{B}_{v}^{(2),v})_{+}})dv\\
|\mathcal{F}_{S(\alpha T)},P_{1,2}(\alpha T))\\
\leq\Gamma_{1}^{2}\Vert f_{1}\Vert_{\infty}\Vert f_{2}\Vert_{\infty}e^{T-\alpha T-\log(\delta)}\int_{\alpha T}^{(T+\alpha T)/2}e^{-(v-\alpha T+\log(\delta))}e^{-2(\theta-\epsilon')(T-v+\log(\delta))}dv\\
=\frac{\Gamma_{1}^{2}\Vert f_{1}\Vert_{\infty}\Vert f_{2}\Vert_{\infty}}{\delta^{2+2(\theta-\epsilon')}}e^{T-2(\theta-\epsilon')T}\left[\frac{e^{(2(\theta-\epsilon')-1)v}}{2(\theta-\epsilon')-1}\right]_{\alpha T}^{(T+\alpha T)/2}\\
\leq\frac{\Gamma_{1}^{2}\Vert f_{1}\Vert_{\infty}\Vert f_{2}\Vert_{\infty}}{\delta^{2+2(\theta-\epsilon')}}\frac{\exp\left(-(2(\theta-\epsilon')-1)T+(2(\theta-\epsilon')-1)\frac{(T+\alpha T)}{2}\right)}{2(\theta-\epsilon')-1}\\
=\frac{\Gamma_{1}^{2}\Vert f_{1}\Vert_{\infty}\Vert f_{2}\Vert_{\infty}}{\delta^{2+2(\theta-\epsilon')}}\frac{\exp\left(-(2(\theta-\epsilon')-1)\left(\frac{T-\alpha T}{2}\right)\right)}{2(\theta-\epsilon')-1}\,.\label{eq:bout-01}
\end{multline}
We have
\begin{multline}
\left|e^{T-\alpha T-B_{\alpha T}^{(1)}}\E\left(\int_{(T+\alpha T)/2}^{T+B_{T}^{(1)}}e^{-(v-\alpha T-B_{\alpha T}^{(1)})}f_{1}(\widehat{B}_{T}^{(1),v})f_{2}(\widehat{B}_{T}^{(2),v})dv|\mathcal{F}_{S(\alpha T)},P_{1,2}(\alpha T)\right)\right.\\
\left.-\int_{(T+\alpha T)/2}^{T-\log(\delta)}e^{-(v-T)}\E(\1_{v\leq T+\overline{B}_{T}^{(1),v}}f_{1}(\overline{B}_{T}^{(1),v})f_{2}(\overline{B}_{T}^{(2),v}))dv\right|\\
=\left|e^{T-\alpha T-B_{\alpha T}^{(1)}}\E(\int_{(T+\alpha T)/2}^{T-\log(\delta)}e^{-(v-\alpha T-B_{\alpha T}^{(1)})}\1_{v\leq T+B_{T}^{(1)}}f_{1}(\widehat{B}_{T}^{(1),v})f_{2}(\widehat{B}_{T}^{(2),v})dv|\mathcal{F}_{S(\alpha T)},P_{1,2}(\alpha T))\right.\\
\left.-e^{T-\alpha T-B_{\alpha T}^{(1)}}\E(\int_{(T+\alpha T)/2}^{T-\log(\delta)}e^{-(v-\alpha T-B_{\alpha T}^{(1)})}\1_{v\leq T-\overline{B}_{T}^{(1),v}}f_{1}(\overline{B}_{T}^{(1),v})f_{2}(\overline{B}_{T}^{(2),v})dv|\mathcal{F}_{S(\alpha T)},P_{1,2}(\alpha T))\right|\\
=e^{T-\alpha T-B_{\alpha T}^{(1)}}\left|\int_{(T+\alpha T)/2}^{T-\log(\delta)}e^{-(v-\alpha T-B_{\alpha T}^{(1)})}\E(\E(\1_{v\leq T+B_{T}^{(1)}}f_{1}(\widehat{B}_{T}^{(1),v})f_{2}(\widehat{B}_{T}^{(2),v})\right.\\
|\widehat{B}_{v}^{(1),v},\mathcal{F}_{S(\alpha T)},P_{1,2}(\alpha T))|\mathcal{F}_{S(\alpha T)},P_{1,2}(\alpha T))dv\\
\left.-\int_{(T+\alpha T)/2}^{T-\log(\delta)}e^{-(v-\alpha T-B_{\alpha T}^{(1)})}\E(\E(\1_{v\leq T+\overline{B}_{T}^{(1),v}}f_{1}(\overline{B}_{T}^{(1),v})f_{2}(\overline{B}_{T}^{(2),v}))dv|\overline{B}_{v}^{(1),v}))dv\right|\label{eq:abs-a-borner-01}
\end{multline}
We observe that, for all $v$ in $[(T+\alpha T)/2,T-\log(\delta)]$,
\[
\E(\1_{v\leq T+B_{T}^{(1)}}f_{1}(\widehat{B}_{T}^{(1),v})f_{2}(\widehat{B}_{T}^{(2),v})|\widehat{B}_{v}^{(1),v},\mathcal{F}_{S(\alpha T)},P_{1,2}(\alpha T))=\Psi(\widehat{B}_{v}^{(1),v})\,,
\]
\[
\E(\1_{v\leq T+\overline{B}_{T}^{(1),v}}f_{1}(\overline{B}_{T}^{(1),v})f_{2}(\overline{B}_{T}^{(2),v})|\overline{B}_{v}^{(1),v})=\Psi(\overline{B}_{v}^{(1),v})\overset{\text{law}}{=}\Psi(B_{\infty})\,,
\]
for some function $\Psi$ (the same on both lines) such that $\Vert\Psi\Vert_{\infty}\leq\Vert f_{1}\Vert_{\infty}\Vert f_{2}\Vert_{\infty}$.
So, by Theorem \ref{lem:sgibnev} and Corollary \ref{cor:sgibnev},
the quantity in Equation (\ref{eq:abs-a-borner-01}) can be bounded
by 
\[
e^{T-\alpha T-B_{\alpha T}^{(1)}}\int_{(T+\alpha T)/2}^{T-\log(\delta)}e^{-(v-\alpha T-B_{\alpha T}^{(1)})}\Gamma_{1}\Vert f_{1}\Vert_{\infty}\Vert f_{2}\Vert_{\infty}e^{-(\theta-\epsilon')(v-\alpha T-B_{\alpha T}^{(1)})}dv
\]
(coming from Corollary \ref{cor:sgibnev} there is an integral over
a set of Lebesgue measure zero in the above bound, but this term vanishes).
The above bound can in turn be bounded by:
\begin{multline}
\frac{\Gamma_{1}\Vert f_{1}\Vert_{\infty}\Vert f_{2}\Vert_{\infty}}{\delta^{(\theta-\epsilon')}}e^{T}\int_{(T+\alpha T)/2}^{T-\log(\delta)}e^{(\theta-\epsilon')\alpha T}e^{-(\theta-\epsilon'+1)v}dv\\
\leq\frac{\Gamma_{1}\Vert f_{1}\Vert_{\infty}\Vert f_{2}\Vert_{\infty}}{\delta^{\theta-\epsilon'}}e^{T+\alpha T(\theta-\epsilon')}\exp(-(\theta-\epsilon'+1)(\frac{T+\alpha T}{2}))\\
=\frac{\Gamma_{1}\Vert f_{1}\Vert_{\infty}\Vert f_{2}\Vert_{\infty}}{\delta^{\theta-\epsilon'}}\exp\left(-(\theta-\epsilon'-1)\left(\frac{T-\alpha T}{2}\right)\right)\,.\label{eq:bout-02}
\end{multline}
We have
\begin{multline}
\int_{\frac{T+\alpha T}{2}}^{T-\log(\delta)}e^{-(v-T)}\E(\1_{v\leq T+\overline{B}_{T}^{(1),v}}f_{1}(\overline{B}_{T}^{(1),v})f_{2}(\overline{B}_{T}^{(2),v}))dv\\
=\E\left(\int_{-\left(\frac{T-\alpha T}{2}\right)}^{-\log(\delta)}e^{-v}\1_{v\leq\overline{B}_{0}^{(1),v}}f_{1}(\overline{B}_{0}^{(1),v})f_{2}(\overline{B}_{0}^{(1),v})dv\right)\label{eq:bout-03}
\end{multline}
and
\begin{multline}
\int_{-\infty}^{-\frac{(T-\alpha T)}{2}}e^{-v}|\E(f_{1}(\overline{B}_{0}^{(1),v})f_{2}(\overline{B}_{0}^{(2),v}))|dv\\
\text{(since }\overline{B}_{0}^{(1),v}\text{ and }\overline{B}_{0}^{(2),v}\text{ are independant conditionnaly on }\overline{B}_{v}^{(1),v}\text{, }\overline{B}_{v}^{(2),v}\\
\text{if }v-\log(\delta)\leq0\text{)}\\
\text{(using Theorem \ref{lem:sgibnev} and Corollary \ref{cor:sgibnev}) }\\
\leq\int_{-\infty}^{-\frac{(T-\alpha T)}{2}}e^{-v}\Gamma_{1}^{2}\Vert f_{1}\Vert_{\infty}\Vert f_{2}\Vert_{\infty}\E(e^{-(\theta-\epsilon')(-v-\overline{B}_{v}^{(1),v})_{+}}e^{-(\theta-\epsilon')(-v-\overline{B}_{v}^{(2),v})_{+}})dv\\
\text{(again, coming from Corollary \ref{cor:sgibnev} there is an integral }\\
\text{over a set of Lebesgue measure zero in the above bound, but this term vanishes)}\\
\leq\int_{-\infty}^{-\frac{(T-\alpha T)}{2}}e^{-v}\Gamma_{1}^{2}\Vert f_{1}\Vert_{\infty}\Vert f_{2}\Vert_{\infty}e^{-2(\theta-\epsilon')(-v+\log(\delta))}dv\\
=\frac{\Gamma_{1}^{2}\Vert f_{1}\Vert_{\infty}\Vert f_{2}\Vert_{\infty}}{\delta^{2(\theta-\epsilon')}}\frac{\exp\left(-(2(\theta-\epsilon')-1)\frac{(T-\alpha T)}{2}\right)}{2(\theta-\epsilon')-1}\,.\label{eq:bout-04}
\end{multline}
Equations (\ref{eq:bout-01}), (\ref{eq:bout-02}), (\ref{eq:bout-03})
and (\ref{eq:bout-04}) give us the desired result.
\end{proof}
\begin{lem}
\label{lem:calcul-exact-limite}Let $k$ in $\{0,1,2,\dots,p\}$.
We suppose $q$ is even and $q=2p$. Let $\alpha\in(q/(q+2),1)$.
We suppose $F=f_{1}\otimes f_{2}\otimes\dots\otimes f_{q}$, with
$f_{1}$, \ldots{} , $f_{q}$ in $\mathcal{B}_{sym}^{0}(1)$. We
then have~:
\begin{multline}
\epsilon^{-q/2}\E(F(B_{T,}^{(1)}\dots,B_{T}^{(q)})\1_{P_{\alpha T}}\1_{\#\mathcal{L}_{1}=q})\\
\underset{\epsilon\rightarrow0}{\longrightarrow}\prod_{i=1}^{p}\int_{-\infty}^{-\log(\delta)}e^{-v}\E(\1_{v\leq\overline{B}_{0}^{(1),v}}f_{2i-1}(\overline{B}_{0}^{(1),v})f_{2i}(\overline{B}_{0}^{(2),v}))dv\,.\label{eq:conv-01}
\end{multline}
\end{lem}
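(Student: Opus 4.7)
The plan is a pair-wise decomposition on the event $P_{\alpha T}$. On this event, the $q=2p$ tags occupy $p$ \emph{distinct} fragments $u_1,\dots,u_p\in S(\alpha T)$ with $A_{u_i}=\{2i-1,2i\}$, and I can rewrite
\[
\{\#\mathcal{L}_1=q\}\cap P_{\alpha T}=P_{\alpha T}\cap\bigcap_{i=1}^p P_{2i-1,2i}(T)^{\complement},
\]
so the only subsequent constraint is that each pair eventually splits. Recording $\xi_{u_i}=e^{-\alpha T-B_{\alpha T}^{(2i-1)}}$, the branching property at time $\alpha T$ makes the sub-fragmentations rooted at the $u_i$'s independent conditionally on $\mathcal{F}_{S(\alpha T)}$ and $P_{\alpha T}$, so
\[
\E\bigl(F(B_T^{(1)},\dots,B_T^{(q)})\mathbf{1}_{\#\mathcal{L}_1=q}\bigm|\mathcal{F}_{S(\alpha T)},P_{\alpha T}\bigr)=\prod_{i=1}^p G_i,
\]
with $G_i=\E\bigl(f_{2i-1}(B_T^{(2i-1)})f_{2i}(B_T^{(2i)})\mathbf{1}_{P_{2i-1,2i}(T)^{\complement}}\bigm|\mathcal{F}_{S(\alpha T)},P_{2i-1,2i}(\alpha T)\bigr)$; the extra information contained in $P_{\alpha T}$ concerns the other pairs and is independent of the $i$-th subtree.

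Applying Lemma \ref{lem:terme-b} to each pair yields, uniformly,
\[
e^{T-\alpha T-B_{\alpha T}^{(2i-1)}}G_i=I_i+O\bigl(e^{-c(T-\alpha T)}\bigr)
\]
with $c=(\theta-\varepsilon'-1)/2>0$ (any $\varepsilon'\in(0,\theta-1)$) and $I_i$ the $i$-th integral in the target limit. The identity $\epsilon^{-p}=e^{pT}$ combined with the algebraic cancellation
\[
e^{pT}\prod_{i=1}^p e^{-(T-\alpha T-B_{\alpha T}^{(2i-1)})}=e^{p\alpha T}\prod_{i=1}^p e^{B_{\alpha T}^{(2i-1)}}=\prod_{i=1}^p\xi_{u_i}^{-1}
\]
therefore gives
\[
\epsilon^{-p}\E\bigl(F\,\mathbf{1}_{P_{\alpha T}}\mathbf{1}_{\#\mathcal{L}_1=q}\bigr)=\E\Bigl(\mathbf{1}_{P_{\alpha T}}\prod_{i=1}^p\xi_{u_i}^{-1}\Bigr)\prod_{i=1}^p I_i+o(1),
\]
the $o(1)$ collecting the cross-terms coming from the uniform remainders $O(e^{-c(T-\alpha T)})$.

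It remains to establish the combinatorial identity $\E\bigl(\mathbf{1}_{P_{\alpha T}}\prod_{i=1}^p\xi_{u_i}^{-1}\bigr)\to 1$. For this I pass to the second definition of tagged fragments and condition on the fragmentation tree; integrating out the i.i.d.\ uniform tags $Y_1,\dots,Y_q$ yields
\[
\E\Bigl(\mathbf{1}_{P_{\alpha T}}\prod_{i=1}^p\xi_{u_i}^{-1}\Bigm|\text{tree}\Bigr)=\sum_{\substack{v_1,\dots,v_p\in S(\alpha T)\\\text{pairwise distinct}}}\prod_{i=1}^p\xi_{v_i},
\]
since the pair $(Y_{2i-1},Y_{2i})$ lies in $[l_{v_i},l_{v_i}+\xi_{v_i})^2$ with probability $\xi_{v_i}^2$, one factor of which cancels $\xi_{u_i}^{-1}$ on that contribution. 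By the conservative property $\sum_{v\in S(\alpha T)}\xi_v=1$, so the \emph{unrestricted} $p$-fold sum equals $1$; the correction enforcing distinctness is bounded by $O\bigl(\max_{v\in S(\alpha T)}\xi_v\bigr)=O(e^{-\alpha T})\to 0$. The main technical obstacle is the uniformity in the application of Lemma \ref{lem:terme-b}: one must push the error through the product of $p$ factors without it blowing up against the random prefactor $\prod_i\xi_{u_i}^{-1}$, and the finiteness of $\E\bigl(\mathbf{1}_{P_{\alpha T}}\prod_i\xi_{u_i}^{-1}\bigr)$ furnished by the computation above is precisely what legitimizes the interchange.
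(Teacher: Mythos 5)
Your proposal is correct and follows essentially the same route as the paper: the same identity $P_{\alpha T}\cap\{\#\mathcal{L}_{1}=q\}=P_{\alpha T}\cap\bigcap_{i}P_{2i-1,2i}(T)^{\complement}$, the same conditional factorization given $\mathcal{F}_{S(\alpha T)}$ by the branching property, the same pairwise application of Lemma \ref{lem:terme-b}, and the same cancellation $e^{pT}\prod_{i}e^{-(T-\alpha T-B_{\alpha T}^{(2i-1)})}=\prod_{i}\xi_{u_{i}}^{-1}$. The only point where you diverge is the final step showing $\E(\1_{P_{\alpha T}}\prod_{i}\xi_{u_{i}}^{-1})\rightarrow1$: the paper cancels the factors $e^{\alpha T+B_{\alpha T}^{(2i-1)}}$ against the conditional probabilities $\p(u\{\alpha T,2i\}=u\{\alpha T,2i-1\}\mid\cdot)=e^{-\alpha T-B_{\alpha T}^{(2i-1)}}$ and reduces to $\p(O_{\alpha T})\rightarrow1$, while you condition on the untagged fragmentation, integrate out all $2p$ uniform tags, and invoke conservation of mass on the stopping line; both are exact-cancellation arguments of the same nature and both control the error term through the boundedness of that prefactor expectation. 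One caveat in your version: the sum you obtain after integrating out the tags runs over the \emph{full} stopping line at level $\alpha T$ (all fragments $u$ with $\xi_{\mathbf{a}(u)}\geq e^{-\alpha T}>\xi_{u}$), not over $S(\alpha T)$ as defined in the paper, which consists only of tagged fragments and for which $\sum_{v\in S(\alpha T)}\xi_{v}=1$ is false in general; with the full stopping line the identity $\sum_{v}\xi_{v}=1$ does follow from Assumption \ref{hyp:conservative} (together with Assumption \ref{hyp:delta-step}, which guarantees the line is a.s. finite and crossed by every line of descent), and the distinctness correction is indeed $O(e^{-\alpha T})$, so the argument goes through.
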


\begin{proof}
We have 
\[
P_{\alpha T}\cap\{\#\mathcal{L}_{1}=q\}=P_{\alpha T}\cap\underset{1\leq i\leq p}{\bigcap}P_{2i-1,2i}(T)^{\complement}\,.
\]
By Lemma \ref{lem:terme-b}, we have, for some constant $C$,
\begin{multline}
\left|e^{pT}\E\left(\left.\1_{P_{\alpha T}}\prod_{i=1}^{p}\E(f_{2i-1}\otimes f_{2i}(B_{T}^{(2i-1)},B_{T}^{(2i)}))\1_{P_{2i-1,2i}(T)^{\complement}}\right|\mathcal{F}_{S(\alpha T)}\right)\right.\\
-\E\left.\left(\1_{P_{\alpha T}}\left.\prod_{i=1}^{p}e^{B_{\alpha T}^{(2i-1)}+\alpha T}\int_{-\infty}^{-\log(\delta)}e^{-v}\E(\1_{v\leq\overline{B}_{0}^{(1),v}}f_{2i-1}(\overline{B}_{0}^{(1),v})f_{2i}(\overline{B}_{0}^{(2),v}))dv\right)\right.\right|\\
\leq\prod_{i=1}^{p}\left(\Gamma_{2}\Vert f_{2i-1}\Vert_{\infty}\Vert f_{2i}\Vert_{\infty}\right)\times\E\left(\1_{P_{\alpha T}}\prod_{i=1}^{p}[e^{B_{\alpha T}^{(2i-1)}+\alpha T}e^{-(T-\alpha T)\frac{(\theta-\epsilon'-1)}{2}}]\right)\,.\label{eq:maj-02}
\end{multline}
We introduce the events (for $t\in[0,T]$)
\[
O_{t}=\left\{ \#\{u\{t,2i-1\},1\leq i\leq p\}=p\right\} \,,
\]
and the tribes (for $i$ in $[q]$, $t\in[0,T]$)
\[
\mathcal{F}_{t,i}=\sigma(u\{t,i\},\xi_{u\{t,i\}})\,.
\]
We have~:
\begin{multline}
\E(\1_{P_{\alpha T}}\prod_{i=1}^{p}e^{B_{\alpha T}^{(2i-1)}+\alpha T})=\E(\1_{O_{\alpha T}}\prod_{i=1}^{p}e^{B_{\alpha T}^{(2i-1)}+\alpha T}\E(\prod_{i=1}^{p}\1_{u\{\alpha T,2i-1\}=u\{\alpha T,2i\}}|\vee_{1\leq i\leq p}\mathcal{F}_{\alpha T,2i-1}))\\
=\E(\1_{O_{\alpha T}})\,.\label{eq:1_O}
\end{multline}
We then observe that
\[
O_{\alpha T}^{\complement}=\cup_{i\in[p]}\cup_{j\in[p],j\neq i}\{u\{\alpha T,2i-1\}=u\{\alpha T,2j-1\}\}\,,
\]
and, for $i\neq j$,
\begin{eqnarray*}
\p(u\{\alpha T,2i-1\}=u\{\alpha T,2j-1\}) & = & \E(\E(\1_{u\{\alpha T,2i-1\}=u\{\alpha T,2j-1\}}|\mathcal{F}_{\alpha T,2i-1}))\\
 & = & \E(e^{-\alpha T-B_{\alpha T}^{(2i-1)}})\\
\mbox{(because of Assumption (\ref{hyp:delta-step}))} & \leq & \E(e^{-\alpha T-\log(\delta)})\,.
\end{eqnarray*}
So
\[
\p(O_{\alpha T})\underset{\epsilon\rightarrow0}{\longrightarrow}1\,.
\]
This finishes the proof of Equation (\ref{eq:conv-01}).

\end{proof}

\subsection{Convergence result}

For $f$ and $g$ bounded measurable functions, we set
\begin{equation}
V(f,g)=\int_{-\infty}^{-\log(\delta)}e^{-v}\E(\1_{v\leq\overline{B}_{0}^{(1),v}}f(\overline{B}_{0}^{(1),v})g(\overline{B}_{0}^{(2),v}))dv\,.\label{eq:def-V}
\end{equation}
For $q$ even, we set $\mathcal{I}_{q}$ to be the set of partitions
of $[q]$ into subsets of cardinality $2$. For $I$ in $\mathcal{I}_{q}$
and $t$ in $[0,T]$, we introduce
\[
P_{t,I_{q}}=\{\forall\{i,j\}\in I\,,\,\exists u\in\mathcal{U}\text{ such that }\xi_{u}<e^{-t}\,,\,\xi_{\boldsymbol{a}(u)}\geq e^{-t}\,,\,A_{u}=\{i,j\}\}\,.
\]
For $t$ in $[0,T]$, we define
\[
\mathcal{P}_{t}=\cup_{I\in\mathcal{I}_{q}}P_{t,I}\,.
\]
\begin{prop}
\label{prop:conv-q-pair}Let $q$ be in $\N^{*}$. Let $F=(f_{1}\otimes\dots\otimes f_{q})_{\text{sym }}$
with $f_{1}$, \dots , $f_{q}$ in $\Bsym(1)$. If $q$ is even ($q=2p$)
then
\begin{equation}
\epsilon^{q/2}\E(F(B_{T}^{(1)},\dots,B_{T}^{(q)})\1_{\#\mathcal{L}_{1}=q})\underset{\epsilon\rightarrow0}{\longrightarrow}\sum_{I\in\mathcal{I}_{q}}\prod_{\{a,b\}\in I}V(f_{a},f_{b})\,.\label{eq:conv-q-even}
\end{equation}
\end{prop}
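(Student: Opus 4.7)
The plan is to decompose the event $\{\#\mathcal{L}_{1}=q\}$ according to the grouping of the tags at an intermediate time $\alpha T$, chosen with $\alpha\in(p/(p+1),1)$ so that both Lemmas \ref{lem:conv-nouille-libre} and \ref{lem:conv-nouilles-liees} apply. Concretely, I would write
\[
\1_{\#\mathcal{L}_{1}=q}=\1_{\mathcal{P}_{\alpha T}}\1_{\#\mathcal{L}_{1}=q}+\sum_{k=1}^{q}\sum_{l=0}^{(q-k-1)_{+}}\1_{C_{k,l}(\alpha T)}\1_{\#\mathcal{L}_{1}=q}+\1_{L_{\alpha T}\geq p+1}\1_{\#\mathcal{L}_{1}=q},
\]
which splits the event into three mutually exclusive cases according to whether, at time $\alpha T$, the tags are perfectly paired (i.e.\ $\mathcal{P}_{\alpha T}=C_{0,p}(\alpha T)$), at least one tag is alone in its fragment, or some fragment in $S(\alpha T)$ carries at least three tags.

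I would then expand $F=\frac{1}{q!}\sum_{\tau\in\mathcal{S}_{q}}f_{\tau(1)}\otimes\dots\otimes f_{\tau(q)}$ and apply Lemma \ref{lem:conv-nouille-libre} termwise to the middle sum: for each $\tau$ and each $(k,l)$ with $k\geq 1$ the corresponding term is $o(\epsilon^{q/2})$. Similarly, Lemma \ref{lem:conv-nouilles-liees} used with $k=p+1>q/2$ and the choice $\alpha>q/(2(p+1))=p/(p+1)$ gives $\p(L_{\alpha T}\geq p+1)=o(\epsilon^{q/2})$, so the last term is also negligible after dividing by $\epsilon^{q/2}$.

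For the main term I write $\mathcal{P}_{\alpha T}=\bigsqcup_{I\in\mathcal{I}_{q}}P_{\alpha T,I}$, this union being disjoint because $S(\alpha T)$ induces a unique partition of $[q]$. The joint law of the marked tree $(\xi_{u},A_{u})_{u\in\mathcal{U}}$ together with $(B_{T}^{(1)},\dots,B_{T}^{(q)})$ is invariant under any permutation of the tag indices—a direct consequence of the i.i.d.\ uniform variables $(Y_{i})_{i\in[q]}$ appearing in the second definition of tagged fragments—and $F$ is symmetric, so $\E(F\1_{P_{\alpha T,I}}\1_{\#\mathcal{L}_{1}=q})$ takes the same value for every $I\in\mathcal{I}_{q}$, equal to its value at the canonical pairing $I_{0}=\{\{1,2\},\dots,\{2p-1,2p\}\}$ for which $P_{\alpha T,I_{0}}=P_{\alpha T}$. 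Expanding $F$ once more and applying Lemma \ref{lem:calcul-exact-limite} to each tensor product yields
\[
\epsilon^{-q/2}\E(F\1_{P_{\alpha T}}\1_{\#\mathcal{L}_{1}=q})\underset{\epsilon\to 0}{\longrightarrow}\frac{1}{q!}\sum_{\tau\in\mathcal{S}_{q}}\prod_{i=1}^{p}V(f_{\tau(2i-1)},f_{\tau(2i)}),
\]
and after multiplying by $|\mathcal{I}_{q}|=q!/(2^{p}p!)$ and regrouping the sum over $\tau$ according to the induced pairing $\{\{\tau(2i-1),\tau(2i)\}:1\leq i\leq p\}$ (each such pairing being produced by exactly $2^{p}p!$ permutations, and using $V(f,g)=V(g,f)$), one recovers the announced limit $\sum_{I\in\mathcal{I}_{q}}\prod_{\{a,b\}\in I}V(f_{a},f_{b})$.

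The most delicate step is the exchangeability argument together with the identity $V(f,g)=V(g,f)$ that underpins the combinatorial rearrangement; both rest on the symmetric role played by the two tags of a pair in the construction of Section \ref{subsec:Stationary-age-process}, and have to be traced carefully through the successive conditionings defining $\widehat{B}^{(2),kb}$ and $\overline{B}^{(2),v}$.
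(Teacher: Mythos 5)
Your proposal follows essentially the same route as the paper: split at the intermediate time $\alpha T$ with $\alpha\in(p/(p+1),1)=(q/(q+2),1)$, kill the non-paired configurations via Lemmas \ref{lem:conv-nouille-libre} and \ref{lem:conv-nouilles-liees}, reduce $\mathcal{P}_{\alpha T}$ to the canonical pairing $P_{\alpha T}$ by exchangeability and symmetry of $F$ (picking up the factor $q!/(2^{p}p!)$), apply Lemma \ref{lem:calcul-exact-limite} termwise in the symmetrization, and regroup using the symmetry of $V$ --- exactly the paper's argument. One small inaccuracy: the three events in your displayed indicator identity are not mutually exclusive (a configuration can have both a solitary tag and a fragment carrying at least three tags), so the identity should be an inequality bounding $\1_{\mathcal{P}_{\alpha T}^{\complement}}$ by the sum of the last two terms, which suffices since each of those contributions is $o(\epsilon^{q/2})$ anyway.
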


\begin{proof}
Let $\alpha$ be in $(q/(q+2),1)$. We have
\begin{eqnarray*}
\epsilon^{-q/2}\E(F(B_{T}^{(1)},\dots,B_{T}^{(q)})\1_{\#\mathcal{L}_{1}=q}) & = & \epsilon^{-q/2}\E(F(B_{T}^{(1)},\dots,B_{T}^{(q)})\1_{\#\mathcal{L}_{1}=q}(\1_{\mathcal{P}_{\alpha T}}+\1_{\mathcal{P}_{\alpha T}^{\complement}}))\,.
\end{eqnarray*}
By Lemma \ref{lem:conv-nouille-libre} and Lemma \ref{lem:conv-nouilles-liees},
we have that
\[
\lim_{\epsilon\rightarrow0}\epsilon^{-q/2}\E(F(B_{T}^{(1)},\dots,B_{T}^{(q)})\1_{\#\mathcal{L}_{1}=q}\1_{\mathcal{P}_{\alpha T}^{\complement}})=0
\]
(because $(B_{T}^{(1)},\dots,B_{T}^{(q)})$ is exchangeable). We compute~:
\begin{multline*}
\epsilon^{-q/2}\E(F(B_{T}^{(1)},\dots,B_{T}^{(q)})\1_{\#\mathcal{L}_{1}=q}\1_{\mathcal{P}_{\alpha T}})=\epsilon^{-q/2}\E(F(B_{T}^{(1)},\dots,B_{T}^{(q)})\1_{\#\mathcal{L}_{1}=q}\sum_{I_{q}\in\mathcal{I}_{q}}\1_{P_{\alpha T,I_{q}}})\\
\text{(as }F\text{ is symmetric and }(B_{T}^{(1)},\dots,B_{T}^{(q)})\text{ is exchangeable)}\\
=\frac{q!}{2^{q/2}\left(\frac{q}{2}\right)!}\epsilon^{-q/2}\E(F(B_{T}^{(1)},\dots,B_{T}^{(q)})\1_{\#\mathcal{L}_{1}=q}\1_{P_{\alpha T}})\\
=\frac{q!\epsilon^{-q/2}}{2^{q/2}\left(\frac{q}{2}\right)!}\frac{1}{q!}\sum_{\sigma\in\mathcal{S}_{q}}\E((f_{\sigma(1)}\otimes\dots\otimes f_{\sigma(q)})(B_{T}^{(1)},\dots,B_{T}^{(q)})\1_{\#\mathcal{L}_{1}=q}\1_{P_{\alpha T}})\\
\text{(by Lemma \ref{lem:calcul-exact-limite}) }\\
\underset{\epsilon\rightarrow0}{\longrightarrow}\frac{1}{2^{q/2}\left(\frac{q}{2}\right)!}\sum_{\sigma\in\mathcal{S}_{q}}\prod_{i=1}^{p}V(f_{\sigma(2i-1)},f_{\sigma(2i)})=\sum_{I\in\mathcal{I}_{q}}\prod_{\{a,b\}\in I}V(f_{a},f_{b})\,.
\end{multline*}
\begin{eqnarray*}
\end{eqnarray*}
\end{proof}

\section{\label{sec:Results}Results}

We are interested in the probability measure $\gamma_{T}$ defined
by its action on bounded measurable functions $F\,:\,[0,1]\rightarrow\R$
by
\[
\gamma_{T}(F)=\sum_{u\in\mathcal{U}_{\epsilon}}X_{u}F\left(\frac{X_{u}}{\epsilon}\right)\,.
\]
We define, for all $q$ in $\N^{*}$, $F$ from $[0,1]^{q}$ to $\R$
,
\[
\gamma_{T}^{\otimes q}(F)=\sum_{a\,:\,[q]\rightarrow\mathcal{U_{\epsilon}}}X_{a(1)}\dots X_{a(q)}F\left(\frac{X_{a(1)}}{\epsilon},\dots,\frac{X_{a(q)}}{\epsilon}\right)\,,
\]

\[
\gamma_{T}^{\odot q}(F)=\sum_{a\,:\,[q]\hookrightarrow\mathcal{U_{\epsilon}}}X_{a(1)}\dots X_{a(q)}F\left(\frac{X_{a(1)}}{\epsilon},\dots,\frac{X_{a(q)}}{\epsilon}\right)\,,
\]
where the last sum is taken over all the injective applications $a$
from $[q]$ to $\Ue$. If we set 
\[
\Phi(F)\,:\,(y_{1},\dots,y_{q})\in\R^{+}\mapsto F(e^{-y_{1}},\dots,e^{-y_{q}})\,,
\]
then
\[
\E(\gamma_{T}^{\otimes q}(F))=\E(\Phi(F)(B_{T}^{(1)},\dots,B_{T}^{(q)}))\,,
\]

\[
\E(\gamma_{T}^{\odot q}(F))=\E(\Phi(F)(B_{T}^{(1)},\dots,B_{T}^{(q)})\1_{\#\mathcal{L}_{1}}=q)\,.
\]
We define, for all bounded continuous $f\,:\,\R^{+}\rightarrow\R$,
\begin{equation}
\gamma_{\infty}(f)=\eta(\Phi(f))\,.\label{eq:def-gamma-infty}
\end{equation}
\begin{prop}
[Law of large numbers] \label{prop:convergence-ps}Let f be a continuous
function from $[0,1]$ to $\R$. We have:
\[
\gamma_{T}(f)\underset{T\rightarrow+\infty}{\overset{\text{a.s.}}{\longrightarrow}}\gamma_{\infty}(f)\,.
\]
\end{prop}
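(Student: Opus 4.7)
The plan is to combine the second-moment estimate of Corollary~\ref{cor:maj-fonction-centree} with a Borel--Cantelli argument along a dense deterministic grid, and then to extend the convergence to all real $T\to+\infty$ using mass conservation and uniform continuity of $f$ on the compact $[\delta,1]$.

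\textbf{Reduction to a centered test function.} Set $\tilde f = f-\gamma_\infty(f)$. Then $\Phi(\tilde f)(y)=f(e^{-y})-\gamma_\infty(f)$ satisfies $\eta(\Phi(\tilde f))=0$, so $\Phi(\tilde f)\in\mathcal{B}^{0}_{\mathrm{sym}}(1)$. By Assumption~\ref{hyp:conservative}, mass is conserved and $\gamma_T(\1)=\sum_{u\in\mathcal{U}_\epsilon}X_u=1$ almost surely; hence $\gamma_T(\tilde f)=\gamma_T(f)-\gamma_\infty(f)$, and it suffices to show $\gamma_T(\tilde f)\to 0$ almost surely.

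\textbf{Second-moment bound.} Decompose
\begin{equation*}
\E\bigl(\gamma_T(\tilde f)^2\bigr)=\E\bigl(\gamma_T^{\odot 2}(\tilde f\otimes\tilde f)\bigr)+\E\!\Bigl(\sum_{u\in\mathcal{U}_\epsilon}X_u^{2}\,\tilde f(X_u/\epsilon)^2\Bigr).
\end{equation*}
Because $X_u<\epsilon$ on $\mathcal{U}_\epsilon$ and $\sum_u X_u=1$, the diagonal part is bounded by $\epsilon\|\tilde f\|_\infty^{2}$. The off-diagonal part equals $\E\bigl((\Phi(\tilde f)\otimes\Phi(\tilde f))(B_T^{(1)},B_T^{(2)})\,\1_{\#\mathcal{L}_1=2}\bigr)$ by the identity of Section~\ref{sec:Results}; since $\Phi(\tilde f)\otimes\Phi(\tilde f)\in\mathcal{B}^{0}_{\mathrm{sym}}(2)$ and $\{\#\mathcal{L}_1=2\}\in\sigma(\mathcal{L}_2)$, Corollary~\ref{cor:maj-fonction-centree} with $q=2$ yields the bound $C_1\epsilon$. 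Altogether $\E(\gamma_T(\tilde f)^2)\leq Ce^{-T}$ for a constant $C$ depending only on $\|\tilde f\|_\infty$ and on the parameters of the model.

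\textbf{Borel--Cantelli and interpolation.} Take the grid $T_{n,j}=n+j/n^{2}$ for $n\geq 1$, $0\leq j<n^{2}$. Markov's inequality gives $\p(|\gamma_{T_{n,j}}(\tilde f)|>\lambda)\leq Ce^{-n}/\lambda^{2}$, and since $\sum_{n\geq 1}n^{2}e^{-n}<+\infty$, Borel--Cantelli yields $\gamma_{T_{n,j}}(\tilde f)\to 0$ almost surely along the grid. For a general $T\to+\infty$, pick $n,j$ with $T\in[T_{n,j},T_{n,j+1})$ and split $\mathcal{U}_{e^{-T_{n,j}}}$ into the leaves $u$ that remain in $\mathcal{U}_{e^{-T}}$ (their rescaled size $X_u/e^{-T}$ differs from $X_u/e^{-T_{n,j}}$ by a multiplicative factor in $[1,e^{1/n^{2}}]$) and those with $X_u\in[e^{-T},e^{-T_{n,j}})$ which are refined into descendants of matching total mass $X_u$. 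Uniform continuity of $\tilde f$ on $[\delta,1]$ controls the first contribution by the modulus $\omega_{\tilde f}(e^{1/n^{2}}-1)\to 0$, while the second is bounded by $2\|\tilde f\|_\infty$ times the $\gamma_{T_{n,j}}$-mass of $[e^{-1/n^{2}},1)$, a quantity that tends to zero almost surely thanks to the grid convergence of $\gamma_{T_{n,j}}$ towards $\gamma_\infty$, which has no atom at $1$ under Assumption~\ref{hyp:queue-pi}.

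\textbf{Main obstacle.} The principal difficulty is precisely this last interpolation step: one must exclude that a burst of fragmentations on a short subinterval carries a non-negligible mass. The sketch above extracts such control from the almost-sure convergence on the grid, applied to a countable dense family of continuous test functions, combined with the regularity of $\pi$; a more robust alternative is to upgrade the second-moment bound to higher moments $\E(\gamma_T(\tilde f)^{2k})\leq C_k e^{-kT}$ using the full strength of Section~\ref{sec:Limits-of-symmetric} (in particular Proposition~\ref{prop:conv-q-pair}), which would make the passage from the grid to the continuum essentially immediate.
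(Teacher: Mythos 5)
Your proposal is correct and shares the paper's overall skeleton (a moment bound from Corollary \ref{cor:maj-fonction-centere}, Borel--Cantelli along a deterministic grid, then interpolation between grid points), but both of the non-trivial steps are implemented differently. For the grid step, the paper raises to the fourth moment and uses the harmonic grid $\epsilon_n=1/n$, so that $\p(|\gamma_{T_n}(\bar f)|^4\geq\iota)\lesssim n^{-2}$ is summable; you instead keep only the second moment $\E(\gamma_T(\tilde f)^2)\lesssim e^{-T}$ (splitting off the diagonal by hand, which the paper avoids by working with $\gamma_T^{\otimes q}$ directly) and compensate with a geometrically spaced grid carrying $n^2$ points per unit interval of $T$ --- both are valid, and yours uses strictly less of Section \ref{sec:Limits-of-symmetric}. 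For the interpolation, which you rightly identify as the real obstacle, the mechanisms differ genuinely: the paper bounds the mass $\sum_{u\in\mathcal{U}_{\epsilon_n}^{(2)}}X_u$ of fragments refined between consecutive grid points by applying the grid convergence to the single test function $f=\Id$ and exploiting the strict size reduction $1-e^{-a}>0$ from Assumption \ref{hyp:delta-step} (Equation (\ref{eq:maj_U-2})); you instead bound that mass by $\gamma_{T_{n,j}}(\1_{[e^{-1/n^2},1)})$ and kill it by a portmanteau argument --- grid convergence applied to a countable family of continuous cutoffs $g_{1/m}$ dominating these indicators, together with $\gamma_\infty(\{1\})=0$. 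That route works and does not even need $a>0$, only the absence of an atom of $\eta$ at $0$; it should, however, be written out (choice of the $g_{1/m}$, intersection of the a.s. events, the limit $\kappa\to0$) rather than left as a sketch. One caveat: your closing remark that higher moments would make the passage from the grid to the continuum "essentially immediate" is not accurate --- $\gamma_T$ is a jump process in $T$ with order $e^T$ jumps per unit interval, so no grid, however dense, dispenses with an argument controlling the mass that crosses the threshold between grid points; fortunately your main argument does not rely on that remark.
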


\begin{proof}
We take a bounded measurable function $f\,:\,[0,1]\rightarrow\R$.
We define $\overline{f}=f-\eta(\Phi(f))$. We take an integer $q\geq2$.
We introduce the notation~:
\[
\forall g\,:\,\R^{+}\rightarrow\R\,,\,\forall(x_{1},\dots,x_{q})\in\R^{q}\,,\,g^{\otimes q}(x_{1},\dots,x_{q})=g(x_{1})g(x_{2})\dots g(x_{q})\,.
\]
 We have
\begin{eqnarray*}
\E((\gamma_{T}(f)-\eta(\Phi(f)))^{q}) & = & \E((\gamma_{T}(\overline{f}))^{q})\\
 & = & \E(\gamma_{t}^{\otimes q}(\overline{f}^{\otimes q}))\\
 & = & \E(\gamma_{t}^{\otimes q}((\overline{f}^{\otimes q})_{\text{sym}}))\\
\text{(by Corollary \ref{cor:maj-fonction-centree})} & \leq & \Vert\overline{f}\Vert_{\infty}^{q}\epsilon^{q/2}\left\{ K_{1}(q)+\Gamma_{1}^{q}C_{\text{tree}}(q)\left(\frac{1}{\delta}\right)^{q}q^{2}\right\} \,.
\end{eqnarray*}
We now take sequences $(T_{n}=-\log(n))_{n\geq1}$, $(\epsilon_{n}=1/n)_{n\geq1}$.
We then have, for all $n$ and for all $\iota>0$,
\[
\p([\gamma_{T_{n}}(f)-\eta(\Phi(f))]^{4}\geq\iota)\leq\frac{\Vert\overline{f}\Vert_{\infty}^{4}}{\iota n^{2}}\left\{ K_{1}(4)+\Gamma_{1}^{4}C_{\text{tree}}(4)\left(\frac{1}{\delta}\right)^{4}\times16\right\} \,.
\]
So, by Borell-Cantelli's Lemma,
\begin{equation}
\gamma_{T_{n}}(f)\underset{n\rightarrow+\infty}{\underset{\longrightarrow}{\text{a.s.}}}\eta(\Phi(f))\,.\label{eq:conv-discrete}
\end{equation}

Let $n$ be in $\N^{*}$. We can decompose 
\[
\mathcal{U}_{\epsilon_{n}}=\mathcal{U}_{\epsilon_{n}}^{(1)}\sqcup\mathcal{U}_{\epsilon_{n}}^{(2)}\text{ where }\mathcal{U}_{\epsilon_{n}}^{(1)}=\mathcal{U}_{\epsilon_{n}}\cap\mathcal{U}_{\epsilon_{n+1}}\,,\,\mathcal{U}_{\epsilon_{n}}^{(2)}=\mathcal{U}_{\epsilon_{n}}\backslash\mathcal{U}_{\epsilon_{n+1}}\,.
\]
For $u$ in $\mathcal{U}_{\epsilon_{n}}\backslash\mathcal{U}_{\epsilon_{n+1}}$,
we set $\boldsymbol{d}(u)=\{v\in\mathcal{U}_{\epsilon_{n+1}}\,:\,\boldsymbol{a}(v)=u\}$.
We can then write
\begin{eqnarray*}
\sum_{u\in\mathcal{U}_{\epsilon_{n}}}X_{u}f\left(\frac{X_{u}}{\epsilon_{n}}\right) & = & \sum_{u\in\mathcal{U}_{\epsilon_{n}}^{(1)}}X_{u}f(nX_{u})+\sum_{u\in\mathcal{U}_{\epsilon_{n}}^{(2)}}X_{u}f(nX_{u})\,,\\
\sum_{u\in\mathcal{U}_{\epsilon_{n+1}}}X_{u}f\left(\frac{X_{u}}{\epsilon_{n+1}}\right) & = & \sum_{u\in\mathcal{U}_{\epsilon_{n}}^{(1)}}X_{u}f((n+1)X_{u})+\sum_{u\in\mathcal{U}_{\epsilon_{n}}^{(2)}}\sum_{v\in\boldsymbol{d}(u)}X_{v}f((n+1)X_{v})\,.
\end{eqnarray*}
 So we have, for all $n$,
\begin{multline*}
\left|\sum_{u\in\mathcal{U}_{\epsilon_{n}}^{(2)}}\sum_{v\in\boldsymbol{d}(u)}X_{v}f((n+1)X_{v})-\sum_{u\in\mathcal{U}_{\epsilon_{n}}^{(2)}}X_{u}f(nX_{u})\right|\\
\leq|\gamma_{T_{n+1}}(f)-\gamma_{T_{n}}(f)|+\left|\sum_{u\in\mathcal{U}_{\epsilon_{n}}^{(1)}}X_{u}f((n+1)X_{u})-\sum_{u\in\mathcal{U}_{\epsilon_{n}}^{(1)}}X_{u}f(nX_{u})\right|\,.
\end{multline*}
If we take $f=\Id$, the two terms in the equation above can be transformed:
\begin{multline*}
\left|\sum_{u\in\mathcal{U}_{\epsilon_{n}}^{(2)}}\sum_{v\in\boldsymbol{d}(u)}X_{v}f((n+1)X_{v})-\sum_{u\in\mathcal{U}_{\epsilon_{n}}^{(2)}}X_{u}f(nX_{u})\right|\\
\geq\left|\sum_{u\in\mathcal{U}_{\epsilon_{n}}^{(2)}}\left(X_{u}f(nX_{u})-\sum_{v\in\boldsymbol{d}(u)}X_{v}f(nX_{v})\right)\right|-\left|\sum_{u\in\mathcal{U}_{\epsilon_{n}}^{(2)}}\sum_{v\in\boldsymbol{d}(u)}(X_{v}f(nX_{v})-X_{v}f((n+1)X_{v}))\right|\\
\text{(by Assumption \ref{hyp:delta-step})}\\
\geq\sum_{u\in\mathcal{U}_{\epsilon_{n}}^{(2)}}\left(X_{u}f(nX_{u})-\sum_{v\in\boldsymbol{d}(u)}X_{v}f(nX_{u})e^{-a}\right)-\left|\sum_{u\in\mathcal{U}_{\epsilon_{n}}^{(2)}}\sum_{v\in\boldsymbol{d}(u)}(X_{v}f(nX_{v})-X_{v}f((n+1)X_{v}))\right|\\
\geq\sum_{u\in\mathcal{U}_{\epsilon_{n}}^{(2)}}X_{u}(1-e^{-a})\frac{n}{n+1}-\sum_{u\in\mathcal{U}_{\epsilon_{n}}^{(2)}}\sum_{v\in\boldsymbol{d}(u)}X_{v}\frac{1}{n+1}\,,
\end{multline*}
\begin{multline*}
|\gamma_{T_{n+1}}(f)-\gamma_{T_{n}}(f)|+\left|\sum_{u\in\mathcal{U}_{\epsilon_{n}}^{(1)}}X_{u}f((n+1)X_{u})-\sum_{u\in\mathcal{U}_{\epsilon_{n}}^{(1)}}X_{u}f(nX_{u})\right|\\
\leq|\gamma_{T_{n+1}}(f)-\gamma_{T_{n}}(f)|+\sum_{u\in\mathcal{U}_{\epsilon_{n}}^{(1)}}X_{u}\frac{1}{n}\,.
\end{multline*}
\uline{Let \mbox{$\iota>0$}}. We fix $\omega$ in $\Omega$. Almost
surely, there exists $n_{0}$ such that, for $n\geq n_{0}$, $|\gamma_{T_{n+1}}(f)-\gamma_{T_{n}}(f)|<\iota$.
For $n\geq n_{0}$, we can then write (still with $f=\Id$):
\begin{equation}
\sum_{u\in\mathcal{U}_{\epsilon_{n}}^{(2)}}X_{u}\leq\frac{n+1}{n(1-e^{-a})}\left(\iota+\sum_{u\in\mathcal{U}_{\epsilon_{n}}^{(2)}}\sum_{v\in\boldsymbol{d}(u)}X_{v}\frac{1}{n+1}+\sum_{u\in\mathcal{U}_{\epsilon_{n}}^{(1)}}X_{u}\frac{1}{n}\right)\leq\frac{n+1}{n(1-e^{-a})}\left(\iota+\frac{1}{n}\right)\,.\label{eq:maj_U-2}
\end{equation}
Let $n\geq n_{0}$ and $t$ in $(T_{n},T_{n+1})$. We can decompose
\[
\mathcal{U}_{\epsilon_{n}}=\mathcal{U}_{\epsilon_{n}}^{(1)}(t)\sqcup\mathcal{U}_{\epsilon_{n}}^{(2)}(t)\text{ where }\mathcal{U}_{\epsilon_{n}}^{(1)}(t)=\mathcal{U}_{\epsilon_{n}}\cap\mathcal{U}_{e^{-t}}\,,\,\mathcal{U}_{\epsilon_{n}}^{(2)}(t)=\mathcal{U}_{\epsilon_{n}}\backslash\mathcal{U}_{e^{-t}}\,.
\]
For $u$ in $\mathcal{U}_{\epsilon_{n}}\backslash\mathcal{U}_{\epsilon_{n}}^{(1)}(t)$,
we set $\boldsymbol{d}(u,t)=\{v\in\mathcal{U}_{e^{-t}}\,:\,\boldsymbol{a}(v)=u\}$.
For any continuous $f$ from $[0,1]$ to $\mathcal{\R}$, there exists
$n_{1}\in\N^{*}$ such that, for all $x,y\in[0,1]$, $|x-y|<1/n_{1}\Rightarrow|f(x)-f(y)|<\iota$.
Suppose that $n\geq n_{0}\vee n_{1}$. Then we have (for all $t\in[T_{n},T_{n+1}]$),
\begin{multline}
|\gamma_{t}(f)-\gamma_{T_{n}}(f)|\leq\left|\sum_{u\in\mathcal{U}_{\epsilon_{n}}^{(1)}(t)}X_{u}f(e^{t}X_{u})-\sum_{u\in\mathcal{U}_{\epsilon_{n}}^{(1)}(t)}X_{u}f(nX_{u})\right|\\
+\left|\sum_{u\in\mathcal{U}_{\epsilon_{n}}^{(2)}(t)}X_{u}f(e^{t}X_{u})-\sum_{u\in\mathcal{U}_{\epsilon_{n}}^{(2)}(t)}X_{u}f(nX_{u})\right|\\
\leq\sum_{u\in\mathcal{U}_{\epsilon_{n}}^{(1)}(t)}X_{u}\iota+2\sum_{u\in\mathcal{U}_{\epsilon_{n}}^{(2)}(t)}X_{u}\Vert f\Vert_{\infty}\\
\text{(using Equation \ref{eq:maj_U-2})}\leq\iota+2\Vert f\Vert_{\infty}\frac{n+1}{n(1-e^{-a})}\left(\iota+\frac{1}{n}\right)\,.\label{eq:maj-temps-continu}
\end{multline}
Equations (\ref{eq:conv-discrete}) and (\ref{eq:maj-temps-continu})
prove the desired result.
\end{proof}
\begin{thm}
[Central-limit Theorem] \label{thm:central-limit}Let $q$ be in
$\N^{*}$. For functions $f_{1}$, \ldots{}, $f_{q}$ which are continuous
and in $\Bsym(1)$, we have
\[
\epsilon^{-q/2}(\gamma_{T}(f_{1}),\dots,\gamma_{T}(f_{q}))\underset{T\rightarrow+\infty}{\overset{\text{law}}{\longrightarrow}}\mathcal{N}(0,(K(f_{i},f_{j}))_{1\leq i,j\leq q})\,(\epsilon=e^{-T})
\]
($K$ is given in Equation (\ref{eq:def-K})).
\end{thm}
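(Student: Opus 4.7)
By the Cramér--Wold device, it is enough to prove the scalar convergence $\epsilon^{-1/2}\gamma_T(g)\to\mathcal{N}(0,K(g,g))$ for every linear combination $g=\sum_{i=1}^{q}\lambda_i f_i$, where by bilinearity $K(g,g)=\sum_{i,j}\lambda_i\lambda_j K(f_i,f_j)$. By Assumption \ref{hyp:conservative} one has $\sum_{u\in\mathcal{U}_\epsilon}X_u=1$, so $\gamma_T(f)-\gamma_\infty(f)=\gamma_T(f-\gamma_\infty(f))$; replacing each $f_i$ by $f_i-\gamma_\infty(f_i)$ we may therefore assume that $\gamma_\infty(g)=0$.

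The approach, inherited from \cite{del-moral-patras-rubenthaler-2009,del-moral-patras-rubenthaler-2011a,del-moral-patras-rubenthaler-2011b,rubenthaler-2016}, is to expand the characteristic function $\E(\exp(it\epsilon^{-1/2}\gamma_T(g)))$ in its Taylor series and to check that for every $q\geq 1$ the scaled moment $\epsilon^{-q/2}\E(\gamma_T(g)^q)$ converges to the $q$-th moment of $\mathcal{N}(0,K(g,g))$, i.e.\ $(q-1)!!\,K(g,g)^{q/2}$ when $q$ is even and $0$ when $q$ is odd. Writing $\gamma_T(g)^q=\gamma_T^{\otimes q}(g^{\otimes q})$, grouping the sum over tuples $a\colon[q]\to\mathcal{U}_\epsilon$ by the partition $\pi$ of $[q]$ encoding the equalities $a(i)=a(j)$, and using the algebraic identity $X_u^n=\epsilon^{n-1}X_u(X_u/\epsilon)^{n-1}$ (together with $X_u/\epsilon<1$ on $\mathcal{U}_\epsilon$ to ensure the bounded remainder), one obtains
\[
\gamma_T(g)^q=\sum_{\pi\vdash[q]}\epsilon^{q-r(\pi)}\,\gamma_T^{\odot r(\pi)}\!\Bigl(\bigotimes_{B\in\pi}\tilde h_{|B|}\Bigr),\qquad\tilde h_n(x):=x^{n-1}g(x)^n,
\]
where $r(\pi)$ denotes the number of blocks of $\pi$.

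Analysing each partition contribution after multiplying by $\epsilon^{-q/2}$: the ``purely injective'' partition ($r(\pi)=q$) produces $\E(\gamma_T^{\odot q}(g^{\otimes q}))=\E(\Phi(g^{\otimes q})(B_T^{(1)},\dots,B_T^{(q)})\1_{\#\mathcal{L}_1=q})$, whose scaled limit is given by Proposition \ref{prop:conv-q-pair} (yielding $(q-1)!!\,V(g,g)^{q/2}$ for $q$ even) and by Corollary \ref{cor:lim-case-q-odd} (yielding $0$ for $q$ odd). For every other partition one decomposes each $\tilde h_n$ with $n\geq 2$ as $\gamma_\infty(\tilde h_n)+(\tilde h_n-\gamma_\infty(\tilde h_n))$; Corollary \ref{cor:maj-fonction-centree} kills every injective sum still containing a centered factor, while the crude bound $|\gamma_T^{\odot r}(H)|\leq\|H\|_\infty$ (combined with $\sum X_u\leq 1$) shows that partitions containing a block of size $\geq 3$ vanish in the limit. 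Only the ``pure pair'' patterns with $k$ doubletons and $q-2k$ centered singletons survive; their combinatorial count $\frac{q!}{2^k k!(q-2k)!}$, multiplied by the constant $\gamma_\infty(\tilde h_2)^k$ and by the $(q-2k-1)!!\,V(g,g)^{(q-2k)/2}$ coming from the singleton block through Proposition \ref{prop:conv-q-pair}, sums via the binomial formula to $(q-1)!!\bigl(V(g,g)+\gamma_\infty(xg(x)^2)\bigr)^{q/2}$ for $q$ even (and to $0$ for $q$ odd). This identifies $K(g,g)=V(g,g)+\gamma_\infty(xg(x)^2)$ and, by polarisation, $K(f_i,f_j)=V(f_i,f_j)+\gamma_\infty(xf_i(x)f_j(x))$.

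The main obstacle is the combinatorial bookkeeping of the previous paragraph: one must separate, within every non-trivial partition, the constant and centered parts of each $\tilde h_{|B|}$, verify that only the pure-doubleton pattern survives the $\epsilon^{-q/2}$ scaling, and check that the resulting combinatorial weights recombine exactly into the Wick formula for a single Gaussian of variance $K(g,g)$. A secondary requirement is the uniform moment bound $\epsilon^{-q/2}\E(\gamma_T(g)^q)\leq(C\|g\|_\infty)^q q!$, which follows from rerunning the partition decomposition with absolute values, applying Corollary \ref{cor:maj-fonction-centree} at centered slots and $|\gamma_T^{\odot r}|\leq 1$ at constant slots; this bound justifies interchanging the Taylor sum with the limit $T\to+\infty$ in the characteristic function and hence closes the proof via Lévy's continuity theorem.
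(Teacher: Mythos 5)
Your proposal follows a genuinely different route from the paper: you reduce to one linear combination by Cram\'er--Wold and then run a method of moments, expanding $\E(\gamma_T(g)^q)$ over the partitions recording which indices hit the same node of $\mathcal{U}_\epsilon$. The paper never does this partition-by-partition moment analysis: it expands the single product $\prod_{u\in\mathcal{U}_\epsilon}\bigl(1+\sqrt{\epsilon}\,\frac{X_u}{\epsilon}(iv_1f_1+\dots+iv_qf_q)(X_u/\epsilon)\bigr)$ in two ways, once through the logarithm (which packages all repeated-node, i.e.\ ``diagonal'', contributions into the single term $\frac12\gamma_T(\Id\times(v_1f_1+\dots+v_qf_q)^2)+O(\sqrt{\epsilon})$, handled by the law of large numbers, Proposition \ref{prop:convergence-ps}), and once as $\sum_k\epsilon^{-k/2}i^k\sum_{j_1,\dots,j_k}v_{j_1}\cdots v_{j_k}\frac{1}{k!}\gamma_T^{\odot k}(f_{j_1}\otimes\dots\otimes f_{j_k})$, in which every slot carries a centered function, so that Corollary \ref{cor:maj-fonction-centree} dominates the series and Corollary \ref{cor:lim-case-q-odd} and Proposition \ref{prop:conv-q-pair} give the limit term by term. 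This design is precisely what allows the paper to only ever estimate injective sums of fully centered products.

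That is also where your argument has a genuine gap. After splitting each $\tilde h_{|B|}$ into $\gamma_\infty(\tilde h_{|B|})+(\tilde h_{|B|}-\gamma_\infty(\tilde h_{|B|}))$, the generic term is $\E\bigl(\gamma_T^{\odot r}(H_1\otimes\dots\otimes H_r)\bigr)$ in which only $c<r$ slots are centered and the others are constants. Corollary \ref{cor:maj-fonction-centree} does not ``kill'' such terms: it requires $F=(f_1\otimes\dots\otimes f_q)_{\text{sym}}$ to lie in $\mathcal{B}_{\text{sym}}^{0}(q)$, and a symmetrized product with a constant slot is not in $\mathcal{B}_{\text{sym}}^{0}(q)$ (integrating out $x_1$ leaves the permutations that place the constant in the first slot). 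What your power counting needs is a mixed estimate $|\E(\gamma_T^{\odot r}(H_1\otimes\dots\otimes H_r))|\leq C\epsilon^{c/2}$ with $c$ the number of centered slots; with it the exponent $\epsilon^{q/2-r+c/2}$ is at least $(m+j)/2$ ($m$ = number of blocks of size $\geq3$, $j$ = number of non-singleton blocks keeping their centered part) and the scheme closes, but without it it fails: for $q=5$ and a partition into one $3$-block plus two singletons the prefactor is $\epsilon^{-1/2}$ and your crude bound $|\gamma_T^{\odot r}(H)|\leq\Vert H\Vert_\infty$ is useless, and pulling the constant slots out of the injective sum by conservativity leaves correction terms of crude size $\epsilon\Vert g\Vert_\infty^{s}$ which, against the prefactor $\epsilon^{-s/2}$, are not $o(1)$ as soon as $s\geq2$ centered singletons remain. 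So you would have to prove a mixed version of Lemma \ref{lem:conv-nouille-libre} (extracting decay only at the centered leaves) and a corresponding version of Proposition \ref{prop:conv-q-pair} for $g^{\otimes s}\otimes1^{\otimes k}$; this is a missing estimate, not mere bookkeeping. A smaller point: the bound $\epsilon^{-q/2}\E(\gamma_T(g)^q)\leq(C\Vert g\Vert_\infty)^qq!$ only gives the characteristic-function series a finite radius of convergence, so L\'evy's continuity theorem applied on a neighbourhood of $0$ is not conclusive; conclude instead by the method of moments, the Gaussian being determined by its moments. Your Wick combinatorics and the identification $K(g,g)=V(g,g)+\eta(\Phi(\Id\times g^2))$ do agree with Equation (\ref{eq:def-K}).
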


\begin{proof}
Let $f_{1}$, \ldots{}, $f_{q}$ $\Bsym(1)$ and $v_{1},\dots,v_{q}\in\R$.
\\
\uline{First}, we develop the product below 
\begin{multline*}
\prod_{u\in\mathcal{U}_{\epsilon}}\left(1+\sqrt{\epsilon}\frac{X_{u}}{\epsilon}(iv_{1}f_{1}+\dots+iv_{q}f_{q})\left(\frac{X_{u}}{\epsilon}\right)\right)=\\
\exp\left(\sum_{u\in\mathcal{U}_{\epsilon}}\log\left[1+\sqrt{\epsilon}\Id\times(iv_{1}f_{1}+\dots+iv_{q}f_{q})\left(\frac{X_{u}}{\epsilon}\right)\right]\right)=\\
\text{(for }\epsilon\text{ small enough)}\\
\exp\left(\sum_{u\in\mathcal{U_{\epsilon}}}\sum_{k\geq1}\frac{(-1)^{k+1}}{k}\epsilon^{k/2}(\Id\times(iv_{1}f_{1}+\dots+iv_{q}f_{q}))^{k}\left(\frac{X_{u}}{\epsilon}\right)\right)=\\
\text{(because, for \ensuremath{u\in\mathcal{U_{\epsilon}}}, \ensuremath{X_{u}/\epsilon\leq1} a.s.)}\\
\exp\left(\frac{1}{\sqrt{\epsilon}}\gamma_{T}(iv_{1}f_{1}+\dots+iv_{q}f_{q})+\frac{1}{2}\gamma_{T}(\Id\times(v_{1}f_{1}+\dots+v_{q}f_{q})^{2})+R_{\epsilon}\right)\,,
\end{multline*}
where 
\begin{eqnarray*}
R_{\epsilon} & = & \sum_{k\geq3}\sum_{u\in\mathcal{U}_{\epsilon}}\frac{(-1)^{k+1}}{k}\epsilon^{k/2-1}X_{u}\left(\frac{X_{u}}{\epsilon}\right)^{k-1}(iv_{1}f_{1}+\dots+iv_{q}f_{q})^{k}\left(\frac{X_{u}}{\epsilon}\right)\\
 & = & \sum_{k\geq3}\frac{(-1)^{k+1}}{k}\epsilon^{k/2-1}\gamma_{T}((\Id)^{k-1}(iv_{1}f_{1}+\dots+iv_{q}f_{q})^{k})\,,
\end{eqnarray*}
\[
|R_{\epsilon}|\leq\sum_{k\geq3}\frac{\epsilon^{k/2-1}}{k}(|v_{1}|\Vert f_{1}\Vert_{\infty}+\dots+|v_{q}|\Vert f_{q}\Vert_{\infty})=O(\sqrt{\epsilon})\,.
\]
We have, for some constant $C$, 
\begin{multline*}
\E\left(\left|\exp\left(\frac{1}{\sqrt{\epsilon}}\gamma_{T}(iv_{1}f_{1}+\dots+iv_{q}f_{q})+\frac{1}{2}\gamma_{T}(\Id\times(v_{1}f_{1}+\dots+v_{q}f_{q})^{2})+R_{\epsilon}\right)\right.\right.\\
\left.\left.-\exp\left(\frac{1}{\sqrt{\epsilon}}\gamma_{T}(iv_{1}f_{1}+\dots+iv_{q}f_{q})+\frac{1}{2}\eta(\Phi(\Id\times(v_{1}f_{1}+\dots+v_{q}f_{q})^{2})\right)\right|\right)\\
\leq\E\left(C\left|\frac{1}{2}\gamma_{T}(\Id\times(v_{1}f_{1}+\dots+v_{q}f_{q})^{2})-\frac{1}{2}\eta(\Phi(\Id\times(v_{1}f_{1}+\dots+v_{q}f_{q})^{2})+R_{\epsilon}\right|\right)\\
\text{(by Proposition \ref{prop:convergence-ps})}\underset{\epsilon\rightarrow0}{\longrightarrow}0\,.
\end{multline*}

\uline{Second}, we develop the same product in a different manner.
We have
\begin{multline*}
\prod_{u\in\mathcal{U}_{\epsilon}}\left(1+\sqrt{\epsilon}\frac{X_{u}}{\epsilon}(iv_{1}f_{1}+\dots+iv_{q}f_{q})\left(\frac{X_{u}}{\epsilon}\right)\right)=\\
\sum_{k\geq0}\epsilon^{-k/2}i^{k}\sum_{1\leq j_{1},\dots,j_{k}\leq q}v_{j_{1}}\dots v_{j_{k}}\sum_{\begin{array}{c}
u_{1},\dots,u_{k}\in\mathcal{U_{\epsilon}}\\
u_{1}<\dots<u_{k}
\end{array}}X_{u_{1}\dots}X_{u_{k}}f_{j_{1}}\left(\frac{X_{u_{1}}}{\epsilon}\right)\dots f_{j_{k}}\left(\frac{X_{u_{k}}}{\epsilon}\right)=\\
\sum_{k\geq0}\epsilon^{-k/2}i^{k}\sum_{1\leq j_{1},\dots,j_{k}\leq q}v_{j_{1}}\dots v_{j_{k}}\frac{1}{k!}\gamma_{T}^{\odot k}(f_{j_{1}}\otimes\dots\otimes f_{j_{k}})\,.
\end{multline*}
By Corollary \ref{cor:maj-fonction-centree}, we have, for all $k$,
\begin{multline*}
\left|\epsilon^{-k/2}\sum_{1\leq j_{1},\dots,j_{k}\leq q}v_{j_{1}}\dots v_{j_{k}}\frac{1}{k!}\E(\gamma_{T}^{\odot k}(f_{j_{1}}\otimes\dots\otimes f_{j_{k}}))\right|\\
\leq\frac{q^{k}\sup(|v_{1}|,\dots,|v_{q}|)^{k}\sup(\Vert f_{1}\Vert_{\infty},\dots,\Vert f_{q}\Vert_{\infty})^{k}}{k!}\left\{ K_{1}(q)+\Gamma_{1}^{q}C_{\text{tree}}(q)\left(\frac{1}{\delta}\right)^{q}q^{2}\right\} \,.
\end{multline*}
 So, by Corollary \ref{cor:lim-case-q-odd} and Proposition \ref{prop:conv-q-pair},
we get that 
\begin{multline*}
\E\left(\prod_{u\in\mathcal{U}_{\epsilon}}\left(1+\sqrt{\epsilon}\frac{X_{u}}{\epsilon}(iv_{1}f_{1}+\dots+iv_{q}f_{q})\left(\frac{X_{u}}{\epsilon}\right)\right)\right)\\
\underset{\epsilon\rightarrow0}{\longrightarrow}\sum_{\begin{array}{c}
k\geq0\\
k\text{ even}
\end{array}}(-1)^{k/2}\sum_{1\leq j_{1},\dots,j_{k}\leq q}\frac{1}{k!}\sum_{I\in I_{k}}\prod_{\{a,b\}\in I}V(v_{j_{a}}f_{j_{a}},v_{j_{b}}f_{j_{b}})\\
=\sum_{\begin{array}{c}
k\geq0\\
k\text{ even}
\end{array}}\frac{(-1)^{k/2}}{2^{k/2}(k/2)!}\sum_{1\leq j_{1},\dots,j_{k}\leq q}V(v_{j_{1}}f_{j_{1}},v_{j_{2}}f_{j_{2}})\dots V(f_{j_{k-1}},f_{j_{k}})\\
=\sum_{\begin{array}{c}
k\geq0\\
k\text{ even}
\end{array}}\frac{(-1)^{k/2}}{2^{k/2}(k/2)!}\left(\sum_{1\leq j_{1},j_{2}\leq q}v_{j_{1}}v_{j_{2}}V(f_{j_{1}},f_{j_{2}})\right)^{k/2}\\
=\exp\left(-\frac{1}{2}\sum_{1\leq j_{1},j_{2}\leq q}v_{j_{1}}v_{j_{2}}V(f_{j_{1}},f_{j_{2}})\right)\,.
\end{multline*}
\uline{In conclusion}, we have
\begin{multline*}
\E\left(\exp\left(\frac{1}{\sqrt{\epsilon}}\gamma_{T}(iv_{1}f_{1}+\dots+iv_{q}f_{q})\right)\right)\\
\underset{\epsilon\rightarrow0}{\longrightarrow}\exp\left(-\frac{1}{2}\eta(\Phi(\Id\times(v_{1}f_{1}+\dots+v_{q}f_{q})^{2}))-\frac{1}{2}\sum_{1\leq j_{1},j_{2}\leq q}v_{j_{1}}v_{j_{2}}V(f_{j_{1}},f_{j_{2}})\right)\,.
\end{multline*}
So we get the desired result with, for all $f$, $g$,
\begin{equation}
K(f,g)=\eta(\Phi(\Id\times fg)+V(f,g))\label{eq:def-K}
\end{equation}
($V$ is defined in Equation (\ref{eq:def-V})).
\end{proof}

\bibliographystyle{amsalpha}
\bibliography{biblio-fragmentation}

\providecommand{\bysame}{\leavevmode\hbox to3em{\hrulefill}\thinspace}
\providecommand{\MR}{\relax\ifhmode\unskip\space\fi MR }
\providecommand{\MRhref}[2]{%
  \href{http://www.ams.org/mathscinet-getitem?mr=#1}{#2}
}
\providecommand{\href}[2]{#2}
\begin{thebibliography}{WLMG67}

\bibitem[Asm03]{asmussen-2003}
S{\o}ren Asmussen, \emph{Applied probability and queues}, second ed.,
  Applications of Mathematics (New York), vol.~51, Springer-Verlag, New York,
  2003, Stochastic Modelling and Applied Probability. \MR{1978607}

\bibitem[Ber02]{bertoin-2002}
Jean Bertoin, \emph{Self-similar fragmentations}, Ann. Inst. H. Poincar\'e
  Probab. Statist. \textbf{38} (2002), no.~3, 319--340. \MR{1899456}

\bibitem[Ber06]{bertoin-2006}
\bysame, \emph{Random fragmentation and coagulation processes}, Cambridge
  Studies in Advanced Mathematics, vol. 102, Cambridge University Press,
  Cambridge, 2006. \MR{2253162 (2007k:60004)}

\bibitem[BM05]{bertoin-martinez-2005}
Jean Bertoin and Servet Mart\'{i}nez, \emph{Fragmentation energy}, Adv. in
  Appl. Probab. \textbf{37} (2005), no.~2, 553--570. \MR{2144567}

\bibitem[Bon52]{bond-1952}
F.~C. Bond, \emph{The third theory of comminution}, AIME Trans. \textbf{193}
  (1952), no.~484.

\bibitem[Cha57]{charles-1957}
R.~J. Charles, \emph{Energy-size reduction relationships in comminution}, AIME
  Trans. \textbf{208} (1957), 80--88.

\bibitem[dlPG99]{delapena-gine-1999}
V{\'{\i}}ctor~H. de~la Pe{\~n}a and Evarist Gin{\'e}, \emph{Decoupling},
  Probability and its Applications (New York), Springer-Verlag, New York, 1999,
  From dependence to independence, Randomly stopped processes. $U$-statistics
  and processes. Martingales and beyond. \MR{MR1666908 (99k:60044)}

\bibitem[DM83]{dynkin-mandelbaum-1983}
E.~B. Dynkin and A.~Mandelbaum, \emph{Symmetric statistics, {P}oisson point
  processes, and multiple {W}iener integrals}, Ann. Statist. \textbf{11}
  (1983), no.~3, 739--745. \MR{MR707925 (85b:60015)}

\bibitem[DM98]{devoto-martinez-1998}
Daniela Devoto and Servet Mart{\'\i}nez, \emph{Truncated {P}areto law and
  oresize distribution of ground rocks}, Mathematical Geology \textbf{30}
  (1998), no.~6, 661--673.

\bibitem[DPR09]{del-moral-patras-rubenthaler-2009}
Pierre {{D}el {M}oral}, Fr\'{e}d\'{e}ric Patras, and Sylvain Rubenthaler,
  \emph{Tree based functional expansions for {F}eynman-{K}ac particle models},
  Ann. Appl. Probab. \textbf{19} (2009), no.~2, 778--825. \MR{2521888}

\bibitem[DPR11a]{del-moral-patras-rubenthaler-2011a}
P.~{{D}el {M}oral}, F.~Patras, and S.~Rubenthaler, \emph{Convergence of
  {$U$}-statistics for interacting particle systems}, J. Theoret. Probab.
  \textbf{24} (2011), no.~4, 1002--1027. \MR{2851242}

\bibitem[DPR11b]{del-moral-patras-rubenthaler-2011b}
\bysame, \emph{A mean field theory of nonlinear filtering}, The {O}xford
  handbook of nonlinear filtering, Oxford Univ. Press, Oxford, 2011,
  pp.~705--740. \MR{2884613}

\bibitem[DZ91]{dawson-zheng-1991}
D.~A. Dawson and X.~Zheng, \emph{Law of large numbers and central limit theorem
  for unbounded jump mean-field models}, Adv. in Appl. Math. \textbf{12}
  (1991), no.~3, 293--326. \MR{1117994 (92k:60220)}

\bibitem[HK11]{hoffmann-krell-2011}
Marc Hoffmann and Nathalie Krell, \emph{Statistical analysis of self-similar
  conservative fragmentation chains}, Bernoulli \textbf{17} (2011), no.~1,
  395--423. \MR{2797996 (2012e:62291)}

\bibitem[HKK10]{harris-knobloch-kyprianou-2010}
S.~C. Harris, R.~Knobloch, and A.~E. Kyprianou, \emph{Strong law of large
  numbers for fragmentation processes}, Ann. Inst. Henri Poincar\'e Probab.
  Stat. \textbf{46} (2010), no.~1, 119--134. \MR{2641773}

\bibitem[Lee90]{lee-1990}
Alan~J. Lee, \emph{{$U$}-statistics}, Statistics: Textbooks and Monographs,
  vol. 110, Marcel Dekker Inc., New York, 1990, Theory and practice.
  \MR{MR1075417 (91k:60026)}

\bibitem[Mel98]{meleard-1998}
Sylvie Meleard, \emph{Convergence of the fluctuations for interacting
  diffusions with jumps associated with {B}oltzmann equations}, Stochastics
  Stochastics Rep. \textbf{63} (1998), no.~3-4, 195--225. \MR{1658082}

\bibitem[PB02]{bird-perrier-2002}
E.~M. Perrier and N.~R. Bird, \emph{Modelling soil fragmentation: the pore
  solid fractal approach}, Soil and Tillage Research \textbf{64} (2002),
  91--99.

\bibitem[Rub16]{rubenthaler-2016}
Sylvain Rubenthaler, \emph{Central limit theorem through expansion of the
  propagation of chaos for {B}ird and {N}anbu systems}, Ann. Fac. Sci. Toulouse
  Math. (6) \textbf{25} (2016), no.~4, 829--873. \MR{3564128}

\bibitem[Sgi02]{sgibnev-2002}
M.~S. Sgibnev, \emph{Stone's decomposition of the renewal measure via
  {B}anach-algebraic techniques}, Proc. Amer. Math. Soc. \textbf{130} (2002),
  no.~8, 2425--2430. \MR{1897469 (2003c:60144)}

\bibitem[Tur86]{turcotte-1986}
D.~L. Turcotte, \emph{Fractals and fragmentation}, Journal of Geophysical
  Research \textbf{91} (1986), no.~B2, 1921--1926.

\bibitem[Uch88]{uchiyama-1988}
K{\={o}}hei Uchiyama, \emph{Fluctuations in a {M}arkovian system of pairwise
  interacting particles}, Probab. Theory Related Fields \textbf{79} (1988),
  no.~2, 289--302. \MR{958292}

\bibitem[Wei85]{weiss-1986}
Norman~L Weiss, \emph{Sme mineral processing handbook}, New York, N.Y. :
  Society of Mining Engineers of the American Institute of Mining,
  Metallurgical, and Petroleum Engineers, 1985 (English), "Sponsored by Seeley
  W. Mudd Memorial Fund of AIME, Society of Mining Engineers of AIME.".

\bibitem[WLMG67]{walker-lewis-mcadames-gilliland-1985}
W.~H. Walker, W.~K. Lewis, W.~H. Mc{A}dams, and E.~R. Gilliland,
  \emph{Principles of chemical engineering}, Mc{G}raw-Hill, 1967.

\end{thebibliography}

\end{document}